\numberwithin{equation}{section}
\theoremstyle{theorem}
\newtheorem{theorem}{Theorem}[section]
\newtheorem*{theorem*}{Theorem}
\newtheorem{corollary}[theorem]{Corollary}
\newtheorem{proposition}[theorem]{Proposition}
\providecommand{\customgenericname}{}
\newcommand{\newcustomtheorem}[2]{%
	\newenvironment{#1}[1]
	{%
		\renewcommand\customgenericname{#2}%
		\renewcommand\theinnercustomgeneric{##1}%
		\innercustomgeneric
	}
	{\endinnercustomgeneric}
}
\theoremstyle{definition}
\newtheorem*{example*}{Example}
\newtheorem*{examples*}{Examples}
\newtheorem*{remark*}{Remark}
\newtheorem*{remarks*}{Remarks}
\newtheorem*{note*}{Note}
\newtheoremstyle{named}{}{}{\itshape}{}{\bfseries}{.}{.5em}{#1\thmnote{ #3}}
\theoremstyle{named}
\newcommand{\card}{\operatorname{card}}
\newcommand{\prob}{\mathbf{P}}
\newcommand{\ri}{\mathrm{i}}
\newcommand{\re}{\mathrm{e}}
\newcommand{\rd}{\operatorname{d}}
\newcommand{\bs}{\boldsymbol{s}}
\newcommand{\bth}{\boldsymbol{\theta}}
\newcommand{\bzero}{\boldsymbol{0}}
\newcommand{\CT}[1]{\underset{#1}{\operatorname{CT}}}
\title[Multi-headed lattices and Green functions]{Multi-headed lattices and Green functions}
\author[Q. Chen]{Qipin Chen}
\address[Q. Chen]{Amazon, Seattle, MA 98109, USA}
\email{qipinche@amazon.com}
\author[S. Chern]{Shane Chern}
\address[S. Chern]{Department of Mathematics and Statistics, Dalhousie University, Halifax, NS, B3H 4R2, Canada}
\email{chenxiaohang92@gmail.com}
\author[L. Jiu]{Lin Jiu}
\address[L. Jiu]{Zu Chongzhi Center for Mathematics and Computational Sciences, Duke Kunshan University, Kunshan, Suzhou, Jiangsu Province, 215316, PR China}
\email{lin.jiu@dukekunshan.edu.cn}
\date{}
\keywords{Multi-headed lattice, Green function, P\'olya number, differential equation, recurrence, creative telescoping.}
\subjclass[2020]{82B41, 05A15, 68W30.}
\thanks{2010 \textit{PACS Numbers.} 05.50.+q, 05.40.Fb, 02.10.Ox, 02.70.Wz, 05.10.-a}
\begin{document}
	
\maketitle

\sloppy

\begin{abstract}
	Lattice geometries and random walks on them are of great interest for their applications in different fields such as physics, chemistry, and computer science. In this work, we focus on multi-headed lattices and study properties of the Green functions for these lattices such as the associated differential equations and the P\'olya numbers. In particular, we complete the analysis of three missing cases in dimensions no larger than five. Our results are built upon an automatic machinery of creative telescoping.
\end{abstract}

\section{Introduction}

Bravais lattices are important objects in crystallography, and they are used to formally model the orderly arrangement of atoms in a crystal \cite[\S{}42]{LAL1967}. For example, the crystal structure of NaCl can be illustrated by the face-centered cubic Bravais lattice as shown in Fig.~\ref{fig:lattice}. Visually, a \emph{Bravais lattice} is an arrangement of points in the three-dimensional space such that when viewed from each point the lattice appears exactly the same; from a mathematical perspective, it is a $\mathbb{Z}$-module generated by three linearly independent vectors in $\mathbb{R}^3$. Up to equivalence, there are 14 Bravais lattices in the three-dimensional space.

\begin{figure}[ht]
	\captionsetup[subfloat]{position=top,captionskip=20pt,farskip=20pt,margin=5pt,format=hang,singlelinecheck=false,labelfont=rm}
	\centering
	\caption{Crystal structure of NaCl}
	\label{fig:lattice}
	\bigskip
	\begin{minipage}{0.8\textwidth}
		\footnotesize{Plotted via the \textit{Mathematica} package \textsf{Crystallica} implemented by Eifert and Heiliger \cite{EH2016}.}
	\end{minipage}
	
	\subfloat[Crystal structure of NaCl]{\label{fig:NaCa}\includegraphics[height=0.375\textwidth]{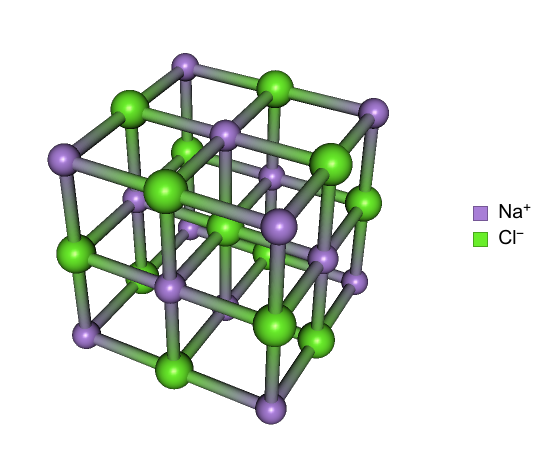}}
	\hspace*{0.035\textwidth}
	\subfloat[Face-centered cubic lattice]{\label{fig:FCC}\includegraphics[height=0.375\textwidth]{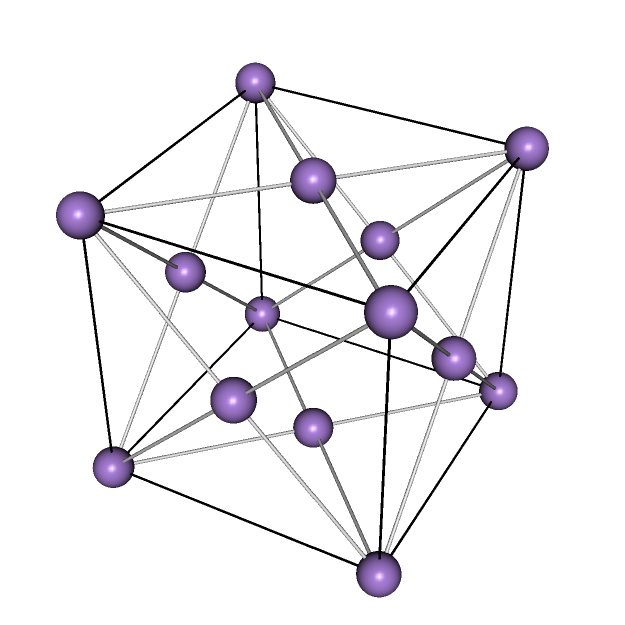}}
\end{figure}

When it comes to higher-dimensional generalizations, if one insists on the criterion of having the same appearance at each lattice point, which is essential in crystal structures, the formalization is usually restricted and sometimes confusing. According to Guttmann \cite[p.~15, Footnote~8]{Gut2010}, such confusion could be eased by adopting the concept of lattice geometries in Meyer's unpublished notes \cite{Mey9999}, which is an addendum to \cite[\S{}1.4]{Mey2000}.

We begin with $\mathbb{Z}^N$, the set of points with integer coordinates in the $N$-dimensional Cartesian space. Let $\mathcal{V}=\{\boldsymbol{v}_1,\ldots,\boldsymbol{v}_q\}$ be a finite set of $N$-dimensional vectors such that for each $\boldsymbol{v}_j$, all its coordinates are taken from $\{-1,0,1\}$. Now a \emph{lattice geometry}, or simply a \emph{lattice}, in dimension $N$ generated by $\mathcal{V}$ is the following subset of $\mathbb{Z}^N$:
\begin{align*}
	\mathcal{L}_{\mathcal{V}}:=\{n_1\boldsymbol{v}_1+\cdots+n_q\boldsymbol{v}_q: n_1,\ldots,n_q\in\mathbb{Z}_{\ge 0}\},
\end{align*}
accompanied with a geometric structure that for two points $\boldsymbol{s}_1,\boldsymbol{s}_2\in \mathcal{L}_{\mathcal{V}}$, there is a \emph{directed} edge from $\boldsymbol{s}_1$ to $\boldsymbol{s}_2$ whenever $\boldsymbol{s}_2-\boldsymbol{s}_1\in \mathcal{V}$. By abuse of notation, we shall also call this lattice $\mathcal{L}_{\mathcal{V}}$. The points in $\mathcal{L}_{\mathcal{V}}$ are called \emph{sites} and the directed edges are called \emph{bonds}. Also, the vectors in $\mathcal{V}$ are called \emph{direction vectors} and the cardinality of $\mathcal{V}$, namely, $q=\card\mathcal{V}$, is called the \emph{coordination number} of this lattice.

It is notable that points in $\mathbb{Z}^N$ are not necessarily sites in a lattice in the same dimension. A simple example is the \emph{face-centered} cubic lattice generated by the 12 direction vectors:
\begin{align*}
	(\pm 1, \pm 1, 0), \quad (\pm 1, 0, \pm 1), \quad (0, \pm 1, \pm 1).
\end{align*}
It is clear that points $(s_1,s_2,s_3)\in\mathbb{Z}^3$ with $s_1+s_2+s_3$ odd are not in the face-centered cubic lattice.

There are many important lattices studied in the literature such as the \emph{simple} and \emph{body-centered} cubic lattices, both of which are Bravais lattices. The simple cubic lattice is generated by $\{(\pm 1,0,0), (0,\pm 1,0), (0,0, \pm 1)\}$ and the body-centered one is generated by $\{(\pm 1, \pm 1, \pm 1)\}$.

Along this line, it becomes natural to consider the following $N$-dimensional lattice generated by the set of direction vectors:
\begin{align}\label{eq:V-MN-def}
	\mathcal{V}_{M,N}:=\big\{(v_1,\ldots,v_N)\in \{-1,0,1\}^N:|v_1|+\cdots+|v_N|=M\big\}.
\end{align}
In other words, the vectors $(v_1,\ldots,v_N)$ are such that $M$ coordinates take value from $\{-1, 1\}$ while the rest are $0$. We shall call this lattice $\mathcal{L}_{\mathcal{V}_{M,N}}$ the \emph{$M$-headed lattice in dimension $N$}, and with this notation, the aforementioned simple, face-centered, and body-centered cubic lattices may be referred to as the $1$-, $2$-, and $3$-headed lattices in dimension three (i.e., $\mathcal{L}_{\mathcal{V}_{1,3}}$, $\mathcal{L}_{\mathcal{V}_{2,3}}$, and $\mathcal{L}_{\mathcal{V}_{3,3}}$), respectively.

As it will be seen later on, the investigation of multi-headed lattices in our current work and elsewhere is truly an Odyssey\footnote{The theme of ``Odyssey'' was famously used by Koutschan, Kauers and Zeilberger \cite{KKZ2011} in the central part of their computer-assisted proof of the Andrews--Robbins $q$-TSPP conjecture. The same technique built upon creative telescoping will be applied in our work as well.}, so we intend to name or rename some lattices of our interest according to Greek mythology. In particular, for the $M=3$ case, we use the name \emph{Cerberus lattices}, and following this nomenclature, we may alternatively call the usual face-centered lattices (i.e., $M=2$) the \emph{Orthrus lattices}. We also name the $M=N-1$ case the \emph{Heracles lattices} for Heracles cutting off the heads, especially the immortal one, of Hydra.

Once we have a lattice geometry, an important topic is about random walks on this lattice, especially noting the fact that we have introduced directed bonds in our lattice construction. In general, random walks have a broad range of applications in areas such as field theory \cite{Bry1984}, quantum chemistry \cite{AF1979}, and web searching \cite{BG2008}.

\begin{figure}[ht]
	\captionsetup[subfloat]{position=top,captionskip=20pt,farskip=20pt,margin=5pt,format=hang,singlelinecheck=false,labelfont=rm}
	\centering
	\caption{Three-dimensional random walks with $1000$ steps}
	\label{fig:random}
	\subfloat[Orthrus lattice\\ Final position at $(-36, 8, 40)$]{\label{fig:random2}\includegraphics[width=0.43\textwidth]{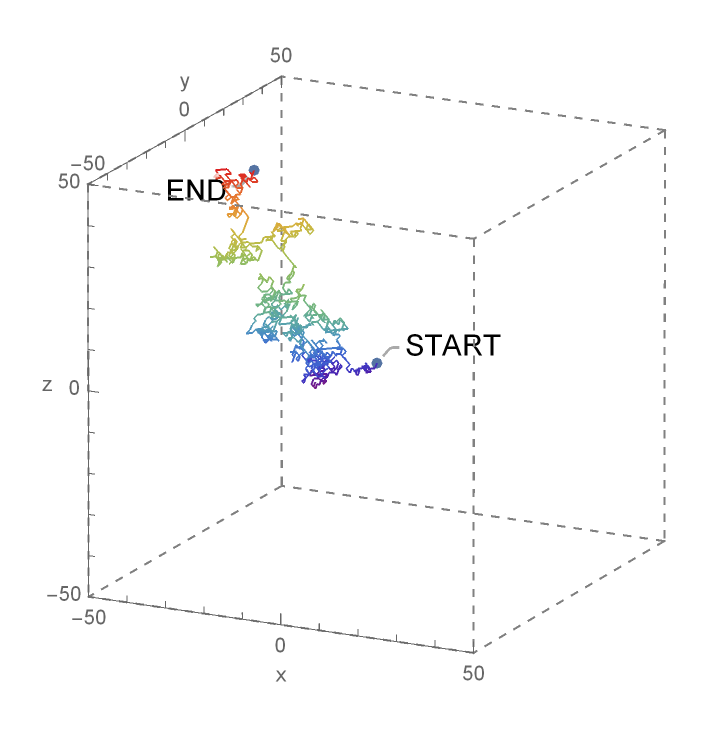}}
	\hspace*{0.035\textwidth}
	\subfloat[Cerberus lattice\\ Final position at $(-8, -42, -12)$]{\label{fig:random3}\includegraphics[width=0.43\textwidth]{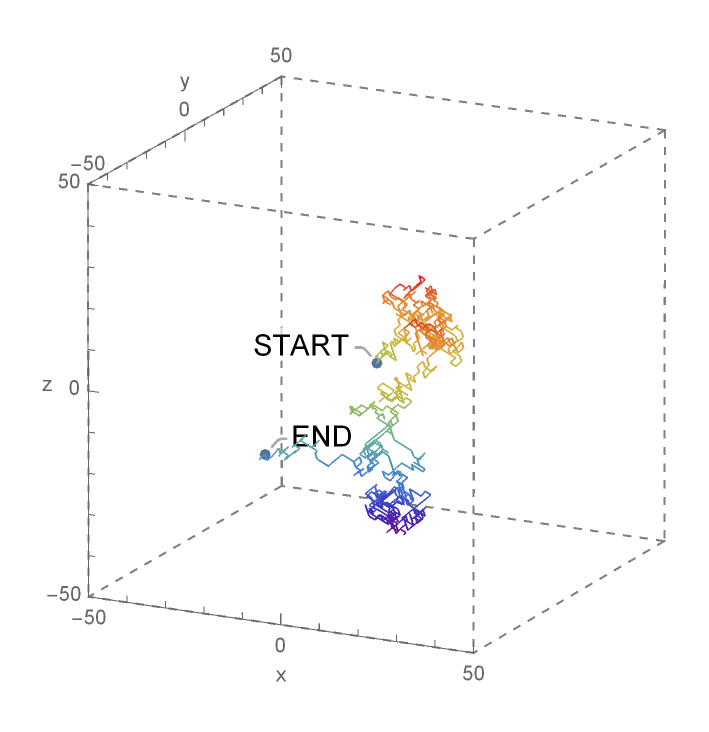}}
\end{figure}

Let $\mathcal{L}_\mathcal{V}$ be a generic $N$-dimensional lattice with $\mathcal{V}$ the set of direction vectors and $q_{\mathcal{L}_\mathcal{V}}$ the coordination number. We say a chain $\mathfrak{w}$ of sites:
\begin{align*}
	\mathfrak{w}:\boldsymbol{0}=\boldsymbol{s}_0\to \boldsymbol{s}_1\to \boldsymbol{s}_2\to \cdots
\end{align*}
is a \emph{random walk} if there is a bond from $\boldsymbol{s}_j$ to $\boldsymbol{s}_{j+1}$ for every $j\ge 0$, and each move from $\boldsymbol{s}_j$ to $\boldsymbol{s}_{j+1}$ is called a \emph{step}. In Fig.~\ref{fig:random}, we have illustrated examples of random walks in three-dimensional Orthrus and Cerberus lattices with $1000$ steps.

A fundamental problem in the theory of random walks is the evaluation of the probability $p_{\mathcal{L}_\mathcal{V}}(\boldsymbol{s},n)$ that a random walk ends at a certain site $\boldsymbol{s}$ after $n$ steps; here we suppose that at each site, the probabilities of moving to the $q_{\mathcal{L}_\mathcal{V}}$ bonded sites as given by $\mathcal{V}$ are equal. We further write the generating function of $p_{\mathcal{L}_\mathcal{V}}(\boldsymbol{s},n)$, which is called the \emph{Green function} of the lattice $\mathcal{L}_\mathcal{V}$, as
\begin{align}
	P_{\mathcal{L}_\mathcal{V}}(\boldsymbol{s},z) := \sum_{n\ge 0} p_{\mathcal{L}_\mathcal{V}}(\boldsymbol{s},n) z^n.
\end{align}
To analyze this generating function, we follow Guttmann \cite{Gut2010} and define the \emph{structure function} of our lattice by
\begin{align}
	\lambda_{\mathcal{L}_\mathcal{V}}(\boldsymbol{\theta}):= \frac{1}{q_{\mathcal{L}_\mathcal{V}}} \sum_{(v_1,\ldots,v_N)\in \mathcal{V}} \re^{\sum_{j=1}^N\ri\,v_j\theta_j},
\end{align}
where $\boldsymbol{\theta}:=(\theta_1,\ldots,\theta_N)$. Then
\begin{align}
	P_{\mathcal{L}_\mathcal{V}}(\boldsymbol{s},z) = \frac{1}{(2\pi)^N}\int_{-\pi}^{\pi}\cdots \int_{-\pi}^{\pi} \frac{\re^{-\ri\,\bs\cdot \bth}\rd \theta_1\cdots \rd \theta_N}{1-z\lambda_{\mathcal{L}_\mathcal{V}}(\bth)}.
\end{align}
Of particular interest is the scenario that a random walk returns to the origin in which case it is called an \emph{excursion}. Namely, we evaluate the lattice Green function at the site $\bs = \bzero$:
\begin{align}
	P_{\mathcal{L}_\mathcal{V}}(\boldsymbol{0},z) = \frac{1}{(2\pi)^N}\int_{-\pi}^{\pi}\cdots \int_{-\pi}^{\pi} \frac{\rd \theta_1\cdots \rd \theta_N}{1-z\lambda_{\mathcal{L}_\mathcal{V}}(\bth)}.
\end{align} 
This function gives the \emph{P\'olya number} $\prob_{\mathcal{L}_\mathcal{V}}(\boldsymbol{0})$ of the lattice in question, that is, the probability that a random walk will eventually return to its original site. In light of \emph{P\'olya's random walk theorem} \cite{Pol1921},
\begin{align}
	\prob_{\mathcal{L}_\mathcal{V}}(\boldsymbol{0}) = 1-\frac{1}{P_{\mathcal{L}_\mathcal{V}}(\boldsymbol{0},1)}.
\end{align}

Green functions for lattices, especially in lower dimensions, have been widely analyzed in the literature. In the case of three dimensions, one may find analytic properties of the Green functions for simple and face-centered cubic lattices in the works of Glasser and Zucker \cite{GZ1987} and Joyce \cite{Joy1994,Joy1998}, and these Green functions are connected with elliptic integrals; see Glasser \cite{Gla2006} for some comprehensive insights. In the four-dimensional circumstance, progress on the simple lattice was made by Glasser, Guttmann, and their collaborators \cite{GG1994,GM1993,GP1993}, and Guttmann \cite{Gut2009} also considered the face-centered case. For face-centered lattices in higher dimensions, we have some recent results in \cite{Bro2009,HKMZ2016,Kou2013,ZHM2015}. For lattices that are not multi-headed, we also witness the contributions of Joyce on the diamond lattice \cite{Joy1973} and the honeycomb lattice \cite{Joy2018,Joy2019}. Almost all of the above were nicely summarized in Guttmann's survey \cite{Gut2010}.

For multi-headed lattices in dimensions no larger than five, there are only three missing cases, namely the Heracles lattices ($M=N-1$) in dimensions $N=4$ and $5$ and the Cerberus lattice ($M=3$) in dimension $N=5$. Hence, the central objective of this work is to analyze some basic properties of the associated Green functions, thereby completing the gaps. We will perform our study \`a la Koutschan \cite{Kou2013}, who took advantage of the creative telescoping method popularized by Zeilberger \cite{Zei1990} that was later implemented in an automatic way by Koutschan himself \cite{Kou2010a} in his \textit{Mathematica} package \textsf{HolonomicFunctions} \cite{Kou2010b}. It is notable that our effort on the Heracles lattices, although still built upon creative telescoping, relies on a summation expression related to the associated Green functions, and this reformulation offers more computational efficiency compared with Koutschan's original work on an alternative integral expression.

\begin{note*}
	All \textit{Mathematica} notebooks for the computations in Subsects.~\ref{sec:H-4D}, \ref{sec:H-5D} and \ref{sec:C-5D}, and Sect.~\ref{sec:conclusion} can be found in \cite{CCJ2024}.
\end{note*}

\section{Multi-headed lattices}

In this section, we work out some general properties of multi-headed lattices $\mathcal{L}_{\mathcal{V}_{M,N}}$. For convenience, we write the Green function $P_{\mathcal{L}_{\mathcal{V}_{M,N}}}(\boldsymbol{s},z)$ as $P_{M,N}(\boldsymbol{s},z)$, and make the same change of subscripts for other functions or parameters accordingly.

In light of \eqref{eq:V-MN-def}, we may first rewrite the structure function as
\begin{align*}
	\lambda(\boldsymbol{\theta})=\lambda_{M,N}(\boldsymbol{\theta}) &= \frac{1}{q} \sum_{(v_1,\ldots,v_N)\in \mathcal{V}_{M,N}} \re^{\sum_{j=1}^N\ri\,v_j\theta_j}\\
	&= \frac{1}{q} \sum_{1\le j_1<\cdots<j_M\le N}\big(\re^{\ri\,\theta_{j_1}}+\re^{-\ri\,\theta_{j_1}}\big)\cdots \big(\re^{\ri\,\theta_{j_M}}+\re^{-\ri\,\theta_{j_M}}\big),
\end{align*}
where
\begin{align}
	q=q_{M,N}=\card \mathcal{V}_{M,N} = 2^M \binom{N}{M}.
\end{align}
Let $\sigma_M(x_1,\ldots,x_N)$ be the $M$-th \emph{elementary symmetric polynomial} in $N$ variables:
\begin{align*}
	\sigma_M(x_1,\ldots,x_N) := \sum_{1\le j_1<\cdots<j_M\le N} x_{j_1}\cdots x_{j_M}.
\end{align*}
If we write
\begin{align}\label{eq:X-Y-def}
	X_j := \re^{\ri\, \theta_j},\qquad Y_j := X_j+X_j^{-1},
\end{align}
then
\begin{align}
	\lambda(\boldsymbol{\theta}) &= \frac{1}{2^M \binom{N}{M}} \sigma_M (Y_1,\ldots,Y_N).
\end{align}
The above also simplifies to
\begin{align}
	\lambda(\boldsymbol{\theta})= \frac{1}{\binom{N}{M}} \sigma_M (\cos\theta_1,\ldots,\cos\theta_N).
\end{align}

Therefore, the Green function for excursions is
\begin{align}
	P_{M,N}(\boldsymbol{0},z) = \frac{1}{(2\pi)^N}\int_{-\pi}^{\pi}\cdots \int_{-\pi}^{\pi} \frac{\rd \theta_1\cdots \rd \theta_N}{1-\frac{z}{\binom{N}{M}}\sigma_M(\cos\theta_1,\ldots,\cos\theta_N)}.
\end{align} 
For convenience, we further normalize the above as
\begin{align}\label{eq:R-int}
	R_{M,N}(\boldsymbol{0},z)&:= P_{M,N}\big(\boldsymbol{0},2^M\tbinom{N}{M}z\big)\notag\\
	&\;= \frac{1}{(2\pi)^N}\int_{-\pi}^{\pi}\cdots \int_{-\pi}^{\pi} \frac{\rd \theta_1\cdots \rd \theta_N}{1-z\, 2^M\sigma_M(\cos\theta_1,\ldots,\cos\theta_N)},
\end{align} 
and write
\begin{align}\label{eq:r-MN}
	R_{M,N}(\boldsymbol{0},z) = \sum_{n\ge 0} r_{M,N}(n) z^n.
\end{align}
Recalling the change of variables in \eqref{eq:X-Y-def}, we have
\begin{align}\label{eq:R-oint}
	R_{M,N}(\boldsymbol{0},z) = \frac{1}{(2\pi\ri)^N}\oint_{\mathcal{C}}\frac{\rd X_1}{X_1}\cdots \oint_{\mathcal{C}}\frac{\rd X_N}{X_N} \frac{1}{1-z\, \sigma_M (Y_1,\ldots,Y_N)},
\end{align}
where $\mathcal{C}$ is the unit circle oriented counterclockwise.

Given a Laurent polynomial $f\in \mathbb{C}[x_1,x_1^{-1},\ldots,x_N,x_N^{-1}]$, we denote by $\CT{x_1,\ldots,x_N}f$ its \emph{constant term}, that is, the coefficient of $x_1^0\cdots x_N^0$. It is clear from \eqref{eq:R-oint} that
\begin{align}\label{eq:p-CT}
	r_{M,N}(n) = \CT{X_1,\ldots,X_N}\big(\sigma_M (Y_1,\ldots,Y_N)\big)^n.
\end{align}
In addition, for $Y=X+X^{-1}$,
\begin{align}\label{eq:CT-Y}
	\CT{X}Y^d = \CT{X}\big(X+X^{-1}\big)^d=\begin{cases}
		\scalebox{0.7}{$\dbinom{d}{\frac{d}{2}}$}, & \text{if $d$ is even},\\[6pt]
		0, & \text{if $d$ is odd}.
	\end{cases}
\end{align}

\begin{proposition}
	We have $r_{M,N}(2n+1)=0$ for all $n\ge 0$ if $M$ is odd or $M=N$.
\end{proposition}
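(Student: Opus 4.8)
The plan is to derive both cases from the constant-term representation \eqref{eq:p-CT} together with a parity argument; the two hypotheses on $M$ call for two slightly different mechanisms, so I would treat them separately.

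For $M$ odd, I would apply the substitution $X_j \mapsto -X_j$ simultaneously for all $j$. Since $Y_j = X_j + X_j^{-1} \mapsto -Y_j$ and $\sigma_M$ is homogeneous of degree $M$, this sends $\sigma_M(Y_1,\ldots,Y_N)$ to $(-1)^M \sigma_M(Y_1,\ldots,Y_N)$, hence
\[
	\big(\sigma_M(Y_1,\ldots,Y_N)\big)^n \longmapsto (-1)^{Mn}\big(\sigma_M(Y_1,\ldots,Y_N)\big)^n.
\]
On the other hand, $X_j \mapsto -X_j$ multiplies a monomial $X_1^{a_1}\cdots X_N^{a_N}$ by $(-1)^{a_1+\cdots+a_N}$, and so leaves the coefficient of $X_1^0\cdots X_N^0$ untouched; that is, the operator $\CT{X_1,\ldots,X_N}$ is invariant under this substitution. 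Comparing the two effects gives $r_{M,N}(n) = (-1)^{Mn} r_{M,N}(n)$. With $M$ odd and $n$ odd we have $(-1)^{Mn} = -1$, forcing $r_{M,N}(n) = 0$; in particular $r_{M,N}(2n+1) = 0$.

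For $M = N$ the sign-flip argument only yields $r_{N,N}(n) = (-1)^{Nn} r_{N,N}(n)$, which is inconclusive when $N$ is even, so here I would argue multiplicatively instead. Since $\sigma_N(Y_1,\ldots,Y_N) = Y_1\cdots Y_N$, we have $\big(\sigma_N(Y_1,\ldots,Y_N)\big)^n = \prod_{j=1}^N Y_j^n$, and because the $j$-th factor involves only $X_j$ the constant term factorizes as $\prod_{j=1}^N \CT{X_j} Y_j^n$. By \eqref{eq:CT-Y} each factor $\CT{X_j} Y_j^n$ vanishes when $n$ is odd, so the product --- and therefore $r_{N,N}(2n+1)$ --- vanishes.

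I do not expect a genuine obstacle here: the only points needing care are the homogeneity of $\sigma_M$ (degree $M$) and the invariance of the constant-term operator under the global sign flip, both of which are immediate. The one conceptual subtlety worth flagging is that the sign-flip argument is not strong enough to cover $M = N$ with $N$ even, which is precisely why that case is handled by the factorization in the last paragraph.
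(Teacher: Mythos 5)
Your proof is correct. The $M=N$ half is identical to the paper's: factor $\sigma_N(Y_1,\ldots,Y_N)=Y_1\cdots Y_N$, split the constant term into the product $\prod_j \CT{X_j}Y_j^n$, and invoke \eqref{eq:CT-Y}. For $M$ odd, however, you take a genuinely different route. The paper expands $\big(\sigma_M(Y_1,\ldots,Y_N)\big)^{2n+1}$ into monomials $Y_1^{d_1}\cdots Y_N^{d_N}$, observes that the total degree $d_1+\cdots+d_N=(2n+1)M$ is odd so that some $d_j$ is odd, and kills each monomial individually via \eqref{eq:CT-Y}. You instead exploit the involution $X_j\mapsto -X_j$: since $Y_j\mapsto -Y_j$, homogeneity of $\sigma_M$ gives $r_{M,N}(n)=(-1)^{Mn}r_{M,N}(n)$ from \eqref{eq:p-CT}, and the sign forces vanishing when $Mn$ is odd. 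Your symmetry argument is slightly more conceptual --- it avoids expanding the power and does not need \eqref{eq:CT-Y} at all for this case, packaging the parity obstruction into a single global sign --- while the paper's monomial-by-monomial argument has the small advantage of reusing the same identity \eqref{eq:CT-Y} that drives the $M=N$ case and the later summation formula of Theorem \ref{th:H-sum}. Your closing remark that the sign flip is inconclusive for $M=N$ with $N$ even is exactly right and correctly motivates the separate factorization argument.
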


\begin{proof}
	We first consider the case where $M$ is odd. Expanding the power $\big(\sigma_{M} (Y_1,\ldots,Y_N)\big)^{2n+1}$, it is clear that all its terms are of the form $Y_1^{d_1}Y_2^{d_2}\cdots Y_N^{d_N}$ with
	\begin{align*}
		d_1+d_2+\cdots +d_N = (2n+1)M
	\end{align*}
	being an odd integer. Hence at least one of the $d$'s, say $d_j$, is also odd. Then by \eqref{eq:CT-Y}, $\CT{X_j}Y_j^{d_j} = 0$ so that
	\begin{align*}
		\CT{X_1,\ldots,X_N}Y_1^{d_1}Y_2^{d_2}\cdots Y_N^{d_N} = \left(\CT{X_1}Y_1^{d_1}\right)\left(\CT{X_2}Y_2^{d_2}\right)\cdots \left(\CT{X_N}Y_N^{d_N}\right) = 0.
	\end{align*}
	Since all terms in the expansion of $\CT{X_1,\ldots,X_N}\big(\sigma_{M} (Y_1,\ldots,Y_N)\big)^{2n+1}$ vanish, so does itself. Recalling \eqref{eq:p-CT} gives our result.
	
	Next, for the case where $M=N$, we have
	\begin{align*}
		r_{N,N}(n) = \CT{X_1,\ldots,X_N}Y_1^n\cdots Y_N^n = \left(\CT{X}Y^{n}\right)^N.
	\end{align*}
	In view of \eqref{eq:CT-Y}, we also have the vanishing of $r_{N,N}(2n+1) = 0$. It is also notable that for even arguments,
	\begin{align*}
		r_{N,N}(2n) = \binom{2n}{n}^N.
	\end{align*}
	This evaluation is standard and can be found in Guttmann's survey \cite[Sect.~2.1]{Gut2010}.
\end{proof}

The above observation implies that for $M$ odd or $M=N$,
\begin{align}\label{eq:R-M-odd}
	R_{M,N}(\boldsymbol{0},z) = \sum_{n\ge 0} r_{M,N}(2n) z^{2n}.
\end{align}
Hence, for these cases, we make the normalization
\begin{align}\label{eq:P-normalization}
	\widetilde{R}_{M,N}(\boldsymbol{0},z) := \sum_{n\ge 0} r_{M,N}(2n) z^{n},
\end{align}
and write
\begin{align}\label{eq:p-normalization}
	\widetilde{r}_{M,N}(n) := r_{M,N}(2n).
\end{align}

\section{``Differential Equations'' versus ``Recurrences''}\label{sec:ODE-REC}

Staying in the heart of analytic properties of Green functions for multi-headed lattices in previous contributions are the differential equations satisfied by these Green functions. In this section, we provide some preliminary information.

A formal power series $F=F(z)$ over a field $\mathbb{K}$ is \emph{D-finite} if there are polynomials $v_0(z),\ldots,v_K(z)\in \mathbb{K}[z]$ with $v_K(z)$ not identical to zero such that
\begin{align*}
	\big(v_0(z)+v_1(z)D+\cdots+ v_K(z)D^K\big)(F) = 0,
\end{align*}
where $D=D_z:=\frac{\rd}{\rd z}$ is the usual \emph{derivation operator}. We may alternatively utilize the \emph{Euler operator} $\vartheta=\vartheta_z$ defined by
\begin{align}
	\vartheta=\vartheta_z:= z\frac{\rd}{\rd z},
\end{align}
and find another family of polynomials $u_0(z),\ldots,u_K(z)\in \mathbb{K}[z]$ with $u_K(z)$ not identical to zero for the D-finite series $F$ such that
\begin{align}\label{eq:ODE-def}
	\big(u_0(z)+u_1(z)\vartheta+\cdots+ u_K(z)\vartheta^K\big)(F) = 0.
\end{align}
Now we say $F$ satisfies the differential equation (with respect to $\vartheta$) $u_0(z)+u_1(z)\vartheta+\cdots+ u_K(z)\vartheta^K$, and call $K$ the \emph{order} and the maximal degree $L$ of the polynomials $u_k(z)$ the \emph{degree} of this differential equation.

Our main objective is to find a differential equation (with respect to $\vartheta$) satisfied by the Green functions for multi-headed lattices. Before exploring this topic, we provide some discussions on the relations between \emph{differential equations} satisfied by a formal power series $F$ and \emph{recurrences} satisfied by the coefficients of this series. Writing $F(z)=\sum_{n\ge 0} f(n)z^n$, we note that for $k\ge 0$,
\begin{align*}
	\vartheta^k (F) = \sum_{n\ge 0} f(n) \big(\vartheta^k z^n\big) = \sum_{n\ge 0} n^k f(n) \cdot z^n.
\end{align*}
Hence, if we reformulate \eqref{eq:ODE-def} as
\begin{align*}
	\left(\sum_{k=0}^K \sum_{\ell=0}^L u_{k,\ell} z^\ell \vartheta^k\right)(F) = 0
\end{align*}
with $u_{k,\ell}\in \mathbb{K}$, then we have a recurrence for the coefficients $f(n)$ in $F(z)$:
\begin{align}\label{eq:ODE-REC}
	\sum_{\ell=0}^L \sum_{k=0}^K u_{k,\ell} (n-\ell)^k f(n-\ell) = 0.
\end{align}
It turns out that the order of the recurrence satisfied by $f(n)$ is at most $L$.

In general, we say a sequence $\{s(n)\}_{n\ge 0}$ is \emph{P-finite} of order $L$ if there are polynomials $h_0(n),\ldots, h_L(n)$ with $h_0(n)$ and $h_L(n)$ not identical to zero such that the following recurrence hold:
\begin{align}\label{eq:P-finite-def}
	h_0(n)s(n)+h_1(n)s(n-1)+\cdots h_L(n)s(n-L) = 0.
\end{align}
Clearly, such a recurrence also produces a differential equation for the generating function of $s(n)$.

In the past study on the Green functions, it was usually expected that the orders of the associated differential equations could be as small as possible. This is mainly due to the connection between these differential equations and the so-called Calabi--Yau equations \cite{AvEvSZ}. Meanwhile, from a computational perspective, a recurrence of a smaller order usually possesses benefits when generating a long list of values of the sequence in question. Hence, in our work, we intend to find differential equations for the Green functions such that either the \emph{order} or the \emph{degree} is possibly minimal.

Now we briefly introduce our procedure:

Firstly, we follow Koutschan \cite{Kou2013} and apply the method of creative telescoping to get a differential equation $\mathtt{ODE}$ for our normalized Green functions $R(z)$, or alternatively $\widetilde{R}(z)$ for the cases associated with \eqref{eq:P-normalization}. This differential equation then gives us a recurrence, say $\mathtt{REC}$, for the coefficients $r(n)$ in the series expansion of $R(z)$ in view of \eqref{eq:ODE-REC}. Meanwhile, the initial values of $r(n)$ can be computed by \eqref{eq:p-CT} or by other means. Combining these initial values with the recurrence $\mathtt{REC}$, we are then able to generate a list of $r(n)$ of a considerable length.

Next, we take advantage of the \textit{Mathematica} package \textsf{Guess} implemented by Kauers \cite{Kau2009}. There are two options:
\begin{enumerate}[label={\textup{(\arabic*)~}},leftmargin=*,labelsep=0cm,align=left,itemsep=6pt]
	\item The command \textbf{GuessMinDE} allows us to conjecture a differential equation $\mathtt{ODE}'$ of possibly the minimal order for $R(z)$ from the list of values of $r(n)$. By \eqref{eq:ODE-REC}, this new differential equation implies a recurrence $\mathtt{REC}'$ for $r(n)$.
	
	\item The command \textbf{GuessMinRE} allows us to conjecture a recurrence $\mathtt{REC}''$ of possibly the minimal order for $r(n)$. This recurrence equivalently yields a differential equation $\mathtt{ODE}''$ for $R(z)$ which is of possibly the minimal degree.
\end{enumerate}

Finally, we are left to show that the two conjectural recurrences $\mathtt{REC}'$ and $\mathtt{REC}''$ are satisfied by $r(n)$. Let us temporarily denote by $r'(n)$ (resp.~$r''(n)$) the sequence generated by the recurrence $\mathtt{REC}'$ (resp.~$\mathtt{REC}''$) and the values of $r(n)$ up to the order of $\mathtt{REC}'$ (resp.~$\mathtt{REC}''$). Then the sequences $r(n)$, $r'(n)$ and $r''(n)$ are P-finite. Recall that for two P-finite sequences, their linear combinations are also P-finite \cite[Theorem 4]{Kau2013}. Hence, there exist recurrences $\mathtt{REC}^*$ and $\mathtt{REC}^{**}$ satisfied by $r(n)-r'(n)$ and $r(n)-r''(n)$, respectively. In particular, $\mathtt{REC}^*$ and $\mathtt{REC}^{**}$ can be computed by utilizing the \textbf{DFinitePlus} command of Koutschan's \textit{Mathematica} package \textsf{HolonomicFunctions} \cite{Kou2010b}. Hence, as long as we verify that $r(n)-r'(n)=0$ (resp.~$r(n)-r''(n)=0$) for $n$ up to the order of $\mathtt{REC}^*$ (resp.~$\mathtt{REC}^{**}$), it is safe to conclude that $r(n)-r'(n)=0$ (resp.~$r(n)-r''(n)=0$) for all $n$, thereby implying that $r(n)$ also satisfies the recurrence $\mathtt{REC}'$ (resp.~$\mathtt{REC}''$) as claimed.

\section{Heracles lattices}

Recall that the Heracles lattice in dimension $N$ is such that $M=N-1$. In this section, we first prove a general summation expression for the coefficients $r_{N-1,N}(n)$ given by \eqref{eq:r-MN}. Then we discuss some benefits of this summation expression in our application of creative telescoping. Finally, we look into the two particular cases $N=4$ and $5$ and perform some calculations on the associated Green functions.

\subsection{A summation formula}

We prove the following result.

\begin{theorem}\label{th:H-sum}
	For $n\ge 0$,
	\begin{align}\label{eq:M=N-1:even}
		r_{N-1,N}(2n)=\sum_{\tiny\substack{k_1,\ldots,k_N\ge 0\\k_1+\cdots+k_N=n}}\scalebox{0.8}{$\dbinom{2n}{2k_1,\ldots,2k_N}\dbinom{2(n-k_1)}{n-k_1}\cdots\dbinom{2(n-k_N)}{n-k_N}$}.
	\end{align}
	Also, if $N$ is even,
	\begin{align}\label{eq:M=N-1:odd1}
		r_{N-1,N}(2n+1) = 0;
	\end{align}
	if $N$ is odd,
	\begin{align}\label{eq:M=N-1:odd2}
		r_{N-1,N}(2n+1)=\sum_{\tiny\substack{k_1,\ldots,k_N\ge 0\\k_1+\cdots+k_N=n+\frac{1-N}{2}}}\scalebox{0.8}{$\dbinom{2n+1}{2k_1+1,\ldots,2k_N+1}\dbinom{2(n-k_1)}{n-k_1}\cdots\dbinom{2(n-k_N)}{n-k_N}$}.
	\end{align}
\end{theorem}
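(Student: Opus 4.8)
The plan is to evaluate $r_{N-1,N}(m)$ directly from the constant-term representation \eqref{eq:p-CT}, handling the even and odd cases together until a single parity split at the very end. The starting observation is the special shape of the $(N-1)$-st elementary symmetric polynomial: choosing $N-1$ of the $N$ variables $Y_1,\ldots,Y_N$ is the same as omitting exactly one, so
\[
	\sigma_{N-1}(Y_1,\ldots,Y_N) = \sum_{j=1}^{N}\prod_{i\neq j} Y_i .
\]

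Next I would raise this to the $m$-th power via the multinomial theorem, indexing terms by exponent vectors $(a_1,\ldots,a_N)$ with $a_1+\cdots+a_N=m$. The only place where care is needed is tracking the total exponent of each $Y_\ell$ in a monomial $\prod_j\big(\prod_{i\neq j}Y_i\big)^{a_j}$: since $Y_\ell$ appears in every factor $\prod_{i\neq j}Y_i$ except the one with $j=\ell$, its exponent is $\sum_{j\neq\ell}a_j = m-a_\ell$. This gives
\[
	\big(\sigma_{N-1}(Y_1,\ldots,Y_N)\big)^m = \sum_{a_1+\cdots+a_N=m}\binom{m}{a_1,\ldots,a_N}\prod_{\ell=1}^{N} Y_\ell^{\,m-a_\ell}.
\]
Applying $\CT{X_1,\ldots,X_N}$ and using that distinct $Y_\ell$ involve disjoint variables, the constant term factors as $\prod_{\ell}\CT{X_\ell}Y_\ell^{\,m-a_\ell}$, and \eqref{eq:CT-Y} evaluates each factor. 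In particular, a term survives only when every exponent $m-a_\ell$ is even, i.e.\ when $a_\ell\equiv m \pmod 2$ for all $\ell$.

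Finally I would split on the parity of $m$. For $m=2n$ the surviving terms have each $a_\ell=2k_\ell$ with $k_1+\cdots+k_N=n$; substituting $\CT{X_\ell}Y_\ell^{2(n-k_\ell)}=\binom{2(n-k_\ell)}{n-k_\ell}$ recovers \eqref{eq:M=N-1:even}. For $m=2n+1$ every $a_\ell$ must be odd, so writing $a_\ell=2k_\ell+1$ gives $\sum_\ell a_\ell = 2\sum_\ell k_\ell + N$, whence a solution exists only when $2n+1-N$ is even. When $N$ is even this parity obstruction is fatal, the index set is empty, and every term vanishes, yielding \eqref{eq:M=N-1:odd1}; when $N$ is odd it produces the constraint $k_1+\cdots+k_N = n+\tfrac{1-N}{2}$, and the same binomial substitution gives \eqref{eq:M=N-1:odd2}.

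I do not anticipate a genuine obstacle here: the whole argument is one multinomial expansion followed by a factorized constant-term evaluation and a parity count. The step that most merits care is the exponent computation $m-a_\ell$, since a miscount would propagate into incorrect binomial arguments; the parity bookkeeping in the odd case, and especially the vanishing for even $N$, is the other point to state precisely.
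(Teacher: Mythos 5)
Your proposal is correct and follows essentially the same route as the paper: the paper writes $\sigma_{N-1}(Y_1,\ldots,Y_N)=Y_1\cdots Y_N\big(Y_1^{-1}+\cdots+Y_N^{-1}\big)$, expands the $n$-th power by the multinomial theorem to get exponents $n-j_\ell$ on each $Y_\ell$ (your $m-a_\ell$), and then applies the factorized constant-term evaluation \eqref{eq:CT-Y} with the same parity analysis. The only cosmetic difference is that you expand $\sum_{j}\prod_{i\neq j}Y_i$ directly rather than factoring out $Y_1\cdots Y_N$ first; the resulting sums and the final parity split coincide.
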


\begin{proof}
	We start by noting that
	\begin{align*}
		\sigma_{N-1} (Y_1,\ldots,Y_N) = Y_1\cdots Y_N \big(Y_1^{-1}+\cdots+Y_N^{-1}\big).
	\end{align*}
	Hence,
	\begin{align*}
		\big(\sigma_{N-1} (Y_1,\ldots,Y_N)\big)^n &= Y_1^n\cdots Y_N^n \big(Y_1^{-1}+\cdots+Y_N^{-1}\big)^n\\
		&= \sum_{\substack{j_1,\ldots,j_N\ge 0\\j_1+\cdots+j_N=n}}\binom{n}{j_1,\ldots,j_N}Y_1^{n-j_1}\cdots Y_N^{n-j_N}.
	\end{align*}
	It follows from \eqref{eq:CT-Y} that
	\begin{align*}
		&\CT{X_1,\ldots,X_N}\big(\sigma_{N-1} (Y_1,\ldots,Y_N)\big)^n\\
		&\qquad= \sum_{\substack{j_1,\ldots,j_N\ge 0\\j_1+\cdots+j_N=n}}\dbinom{n}{j_1,\ldots,j_N}\left(\CT{X_1}Y_1^{n-j_1}\right)\cdots \left(\CT{X_N}Y_N^{n-j_N}\right)\\
		&\qquad= \sum_{\substack{j_1,\ldots,j_N\ge 0\\j_1+\cdots+j_N=n\\j_1\equiv\cdots \equiv j_N\equiv n \bmod{2}}}\dbinom{n}{j_1,\ldots,j_N}\binom{n-j_1}{\frac{n-j_1}{2}}\cdots \binom{n-j_N}{\frac{n-j_N}{2}},
	\end{align*}
	which implies the claimed result in view of \eqref{eq:p-CT}.
\end{proof}

\subsection{``Summation'' versus ``Integral''}\label{sec:sum-int}

We recall that in \cite{Kau2013}, Koutschan first made the change of variables:
\begin{align*}
	x_j:= \cos\theta_j
\end{align*}
so as to obtain
\begin{align*}
	R_{M,N}(\boldsymbol{0},z)= \frac{1}{\pi^N}\int_{-1}^{1}\cdots \int_{-1}^{1} \frac{\rd x_1\cdots \rd x_N}{\big(1-z\, 2^M\sigma_M(x_1,\ldots,x_N)\big)\sqrt{1-x_1^2}\cdots \sqrt{1-x_N^2}}.
\end{align*} 
He then applied creative telescoping to the integral \eqref{eq:R-int}.

Now since we have an alternative summation expression for $r_{N-1,N}(n)$, creative telescoping can be applied to it directly. The case where $N-1$ is odd is easier as for such $N$, we have the vanishing of $r_{N-1,N}(2n+1)$ by \eqref{eq:M=N-1:odd1}. Hence, it is sufficient to work on the normalization \eqref{eq:p-normalization}:
\begin{align*}
	\widetilde{r}_{N-1,N}(n) := r_{N-1,N}(2n).
\end{align*}
It is somehow tricky to handle the case where $N-1$ is even. This is because we have to work on the even- and odd-indexed subsequences separately:
\begin{align*}
	\widetilde{r}^e_{N-1,N}(n) &:= r_{N-1,N}(2n),\\
	\widetilde{r}^o_{N-1,N}(n) &:= r_{N-1,N}(2n+1).
\end{align*}
Once we obtain recurrences for $\widetilde{r}^e_{N-1,N}(n)$ and $\widetilde{r}^o_{N-1,N}(n)$ respectively by creative telescoping, we equivalently have recurrences for the sequences:
\begin{align*}
	\widetilde{r}^{e,*}_{N-1,N}&:=\big\{r_{N-1,N}(0),0,r_{N-1,N}(2),0,r_{N-1,N}(4),0,\ldots\big\},\\
	\widetilde{r}^{o,*}_{N-1,N}&:=\big\{0,r_{N-1,N}(1),0,r_{N-1,N}(3),0,r_{N-1,N}(5),\ldots\big\}.
\end{align*}
Now we can use Koutschan's \textbf{DFinitePlus} to get a recurrence for any linear combination of the above two sequences, including
\begin{align*}
	\widetilde{r}^{e,*}_{N-1,N}+\widetilde{r}^{o,*}_{N-1,N}&=\big\{r_{N-1,N}(0),r_{N-1,N}(1),r_{N-1,N}(2),r_{N-1,N}(3),\ldots\big\}\\
	&= r_{N-1,N}.
\end{align*}
It is notable that the order of the recurrence derived from the above computation is usually larger than that of the minimal recurrence for $r_{N-1,N}(n)$, and in the meantime the order of the differential equation for $R_{N-1,N}(z)$ associated with this recurrence is also not minimal. Hence, we have to perform the ``\emph{Guessing}'' procedure presented in the last part of Sect.~\ref{sec:ODE-REC}.

\subsection{Case: 4D}\label{sec:H-4D}

Note that in this case $M=3$ is odd, so we look into $\widetilde{r}_{3,4}(n)$ and accordingly $\widetilde{R}_{3,4}(\boldsymbol{0},z)$. We first mimic Koutschan's work \cite{Kou2013} and directly apply creative telescoping to the integral
\begin{align*}
	R_{3,4}(\boldsymbol{0},z)= \frac{1}{\pi^4}\int_{-1}^{1}\int_{-1}^{1}\int_{-1}^{1} \int_{-1}^{1} \texttt{(***)}\rd x_1\rd x_2\rd x_3 \rd x_4,
\end{align*}
where the integrand is
\begin{align*}
	\frac{1}{\big(1-8z(x_1x_2x_3+x_2x_3x_4+x_3x_4x_1+x_4x_1x_2)\big)\sqrt{1-x_1^2}\sqrt{1-x_2^2}\sqrt{1-x_3^2} \sqrt{1-x_4^2}}.
\end{align*}
After around \emph{one hour} of computation, we arrive at the following result.

\begin{theorem}\label{th:M3N4}
	The function $\widetilde{R}_{3,4}(\boldsymbol{0},z)$ satisfies a differential equation of order $8$ and degree $16$. The differential equation is of the form
	\begin{align*}
		\scalebox{0.85}{%
			$
			\begin{aligned}
				&(1344 z + \cdots + 151026323282253922352374256330569782279536640 z^{16})\\
				&+ (18816 z + \cdots + 1366788225704397997288987019791656529629806592 z^{16})\vartheta\\
				&+(102144 z + \cdots + 5303121535030477312378786039652035049432285184 z^{16})\vartheta^2\\
				&+ (212352 z + \cdots + 11533376887988124536976314041777845706747281408 z^{16})\vartheta^3\\
				&+(-42 + \cdots + 15391679930285039325517386362534096505705332736 z^{16})\vartheta^4\\
				&+ (357 + \cdots + 12917784851408785491873078058141402044309700608 z^{16})\vartheta^5\\
				&+(-1113 + \cdots + 6663616997264781396236424132096584504800444416 z^{16})\vartheta^6\\
				&+ (1512 + \cdots + 1933136938012850206110390481031293213178068992 z^{16})\vartheta^7\\
				&+(-756 + \cdots + 241642117251606275763798810128911651647258624 z^{16})\vartheta^8.
			\end{aligned}
			$
		}
	\end{align*}
	Accordingly, $\widetilde{r}_{3,4}(n)$ satisfies a recurrence of order $16$.
\end{theorem}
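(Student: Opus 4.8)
The plan is to follow exactly the machinery laid out in Sects.~\ref{sec:ODE-REC}--\ref{sec:sum-int}, specialized to the pair $(M,N)=(3,4)$. Since $M=3$ is odd, Theorem~\ref{th:H-sum} guarantees $r_{3,4}(2n+1)=0$, so all analytic content lives in the even-indexed subsequence $\widetilde{r}_{3,4}(n)=r_{3,4}(2n)$, whose generating function is $\widetilde{R}_{3,4}(\boldsymbol{0},z)$. The first step is to produce a \emph{certified} differential equation for $\widetilde{R}_{3,4}(\boldsymbol{0},z)$ by applying creative telescoping to the integral displayed above (the $\texttt{(***)}$ integrand). Concretely, one feeds the rational-times-algebraic integrand into Koutschan's \textsf{HolonomicFunctions} package, treating the four $\sqrt{1-x_j^2}$ factors and the rational denominator as a D-finite kernel in $z,x_1,\dots,x_4$; the package computes a left ideal of telescoper--certificate pairs, and after eliminating the four integration variables one at a time (integrating each $x_j$ over $[-1,1]$ and checking that the certificate's boundary terms vanish at $x_j=\pm1$, which they do because of the $\sqrt{1-x_j^2}$ vanishing) one obtains an annihilating operator in $\vartheta_z$ for the full integral $R_{3,4}(\boldsymbol{0},z)$. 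Passing to the even part via the normalization \eqref{eq:P-normalization} yields the operator $\mathtt{ODE}$ for $\widetilde{R}_{3,4}(\boldsymbol{0},z)$. This is the step that, as the paper notes, costs ``around one hour'' of computation.

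The second step is verification that the operator so produced has exactly the claimed order $8$ and degree $16$ and the stated leading/trailing coefficients. Here one simply reads off the order (the highest power of $\vartheta$) and degree (the maximal $z$-degree among the eight polynomial coefficients $u_0(z),\dots,u_8(z)$) from the computed operator; the explicit coefficients of $z^{16}$ and the low-order constant terms recorded in the statement serve as a fingerprint confirming that the displayed operator is the one the algorithm returns. Because creative telescoping is a certifying method, the operator annihilates the integral rigorously once the certificates are verified to telescope and the boundary evaluations are confirmed to vanish; thus no further ``guessing'' is strictly needed for this particular theorem, although the paper's broader strategy (the \textbf{GuessMinDE}/\textbf{GuessMinRE} step together with \textbf{DFinitePlus} and a finite check of initial terms) is available as an independent confirmation and as the route to the \emph{minimal} operator.

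The final clause, that $\widetilde{r}_{3,4}(n)$ satisfies a recurrence of order $16$, is then immediate from the order/degree translation established in Sect.~\ref{sec:ODE-REC}. Writing the operator as $\sum_{k=0}^{8}\sum_{\ell=0}^{16} u_{k,\ell}\,z^{\ell}\vartheta^k$ and applying it to $\widetilde{R}_{3,4}(\boldsymbol{0},z)=\sum_{n\ge0}\widetilde{r}_{3,4}(n)z^n$, the identity $\vartheta^k z^n = n^k z^n$ turns \eqref{eq:ODE-def} into the recurrence \eqref{eq:ODE-REC}, namely $\sum_{\ell=0}^{16}\sum_{k=0}^{8} u_{k,\ell}(n-\ell)^k\,\widetilde{r}_{3,4}(n-\ell)=0$. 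Since the degree of the differential equation is $L=16$, the remark following \eqref{eq:ODE-REC} (``the order of the recurrence satisfied by $f(n)$ is at most $L$'') gives a recurrence of order $16$, provided the coefficient of $\widetilde{r}_{3,4}(n)$, i.e.\ $\sum_k u_{k,16}\,n^k$, is not identically zero; one checks this leading recurrence coefficient is a genuine nonzero polynomial in $n$ from the explicit $z^{16}$ data.

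The main obstacle is squarely computational rather than conceptual: the elimination of four integration variables via creative telescoping on an integrand carrying four square-root singularities is expensive in both time and intermediate-expression swell, which is precisely why the authors report an hour-long run and why, in the genuinely new cases, they prefer the summation reformulation of Sect.~\ref{sec:sum-int} for efficiency. The only mathematical subtlety to watch is the vanishing of the boundary contributions in each successive integration; but these are controlled by the $\sqrt{1-x_j^2}$ factors, which force the telescoper certificates to vanish at $x_j=\pm1$, so the argument goes through cleanly.
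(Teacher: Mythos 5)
Your proposal matches the paper's own route: Theorem~\ref{th:M3N4} is obtained exactly by running creative telescoping (via \textsf{HolonomicFunctions}) on the four-fold integral with the $\sqrt{1-x_j^2}$ kernel, checking that the certificate boundary terms vanish, passing to the even part $\widetilde{R}_{3,4}$, and then converting the order-$8$, degree-$16$ operator into an order-$16$ recurrence via \eqref{eq:ODE-REC}. No substantive difference from the paper's argument.
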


Meanwhile, if we directly apply creative telescoping to the summation
\begin{align*}
	\widetilde{r}_{3,4}(n) = \sum_{\tiny\substack{k_1,k_2,k_3,k_4\ge 0\\k_1+k_2+k_3+k_4=n}}\scalebox{0.8}{$\dbinom{2n}{2k_1,2k_2,2k_3,2k_4}\dbinom{2(n-k_1)}{n-k_1}\dbinom{2(n-k_2)}{n-k_2}\dbinom{2(n-k_3)}{n-k_3}\dbinom{2(n-k_4)}{n-k_4}$},
\end{align*}
we get the following recurrence in ONLY \emph{ten minutes}! This fact to some extent indicates that our summation expression is superior in a computational sense.

\begin{theorem}\label{th:M3N4-rec}
	The sequence $\widetilde{r}_{3,4}(n)$ satisfies a recurrence of order $4$. The recurrence is of the form
	\begin{align*}
		\scalebox{0.85}{%
			$
			\begin{aligned}
				0&=(221086792032258663383040 + \cdots + 1988330027074191360 n^{20})\widetilde{r}_{3,4}(n)\\
				&- (123596648884357621088256 + \cdots + 135920997944524800 n^{20})\widetilde{r}_{3,4}(n+1)\\
				&+(2413729498666800513024 + \cdots + 745214176788480 n^{20})\widetilde{r}_{3,4}(n+2)\\
				&- (9569617440812835840 + \cdots + 1074030451200 n^{20})\widetilde{r}_{3,4}(n+3)\\
				&+(9051531325562880 + \cdots + 462944160 n^{20})\widetilde{r}_{3,4}(n+4).
			\end{aligned}
			$
		}
	\end{align*}
	Accordingly, $\widetilde{R}_{3,4}(\boldsymbol{0},z)$ satisfies a differential equation of order $24$ and degree $4$.
\end{theorem}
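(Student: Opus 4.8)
The plan is to prove the recurrence by feeding the summation formula for $\widetilde{r}_{3,4}(n)$ from Theorem~\ref{th:H-sum} (specialized to $N=4$) directly into the creative telescoping machinery, and then to translate the resulting recurrence into a differential equation for $\widetilde{R}_{3,4}(\boldsymbol{0},z)$ by means of the dictionary of Section~\ref{sec:ODE-REC}. First I would view the summand
\[
	F(n;k_1,k_2,k_3,k_4):=\binom{2n}{2k_1,2k_2,2k_3,2k_4}\prod_{i=1}^{4}\binom{2(n-k_i)}{n-k_i}
\]
as a proper hypergeometric term and dispose of the constraint $k_1+k_2+k_3+k_4=n$ by setting $k_4=n-k_1-k_2-k_3$. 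This recasts $\widetilde{r}_{3,4}(n)$ as the genuine triple sum $\sum_{k_1,k_2,k_3}F(n;k_1,k_2,k_3)$ of a proper hypergeometric term in the free indices $k_1,k_2,k_3$, supported on the tetrahedron $\{k_i\ge 0,\ k_1+k_2+k_3\le n\}$ on which the multinomial coefficient does not vanish.

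Next I would apply multivariate creative telescoping to this summand with Koutschan's \textsf{HolonomicFunctions} package \cite{Kou2010b}, following the strategy of \cite{Kou2013}. The algorithm returns a telescoper $L=L(n,S_n)$, a linear operator in $n$ and the forward shift $S_n$, together with certificates $G_1,G_2,G_3$ for which
\[
	L\cdot F=\Delta_{k_1}G_1+\Delta_{k_2}G_2+\Delta_{k_3}G_3,
\]
where $\Delta_{k_i}$ is the forward difference in $k_i$. Summing this identity over all integers $k_1,k_2,k_3$ annihilates the right-hand side, since $F$ and every $G_i$ vanish outside the finite support of the sum, so that $L\cdot\widetilde{r}_{3,4}(n)=0$; reading off $L$ gives exactly the order-$4$ recurrence asserted. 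The rigor of this step rests on two verifications: that the displayed telescoping identity is valid, which is a large but purely routine rational-function identity, and that the boundary terms truly cancel under summation, which is guaranteed by the compact support described above.

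With the recurrence in hand, the differential equation follows from the correspondence \eqref{eq:ODE-REC}: a P-finite recurrence of span $4$ with polynomial coefficients transforms, after multiplying through by a power of $z$ and re-indexing the Euler operator via $\vartheta z^{\ell}=z^{\ell}(\vartheta+\ell)$, into a differential equation for $\widetilde{R}_{3,4}(\boldsymbol{0},z)$ whose $z$-degree is the span of the recurrence and whose order in $\vartheta$ is controlled by the degree of its polynomial coefficients; carrying out this transcription yields the stated differential equation of order $24$ and degree $4$.

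The step I expect to be the main obstacle is the creative telescoping computation itself. The certificates $G_i$ are expected to be very large rational functions, so both their production and the verification of the governing identity are computationally demanding; this is precisely where the summation representation of Theorem~\ref{th:H-sum}, with its compactly supported hypergeometric summand, outperforms the integral representation and reduces the running time from roughly an hour to about ten minutes. Should the telescoper produced by the algorithm not already be of the minimal order $4$, the remedy is the guess-and-certify procedure of Section~\ref{sec:ODE-REC}: guess the order-$4$ recurrence from a long list of values generated directly from the summation of Theorem~\ref{th:H-sum}, then certify it rigorously by forming the difference of $\widetilde{r}_{3,4}(n)$ with the guessed sequence, computing an annihilating recurrence for this P-finite difference via \textbf{DFinitePlus} \cite{Kou2010b} using the closure property of \cite{Kau2013}, and checking that the difference vanishes for enough initial terms to conclude it is identically zero.
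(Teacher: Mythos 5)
Your proposal matches the paper's own route: the authors likewise apply multivariate creative telescoping directly to the summation representation of $\widetilde{r}_{3,4}(n)$ coming from Theorem~\ref{th:H-sum} (rather than to the quadruple integral), obtain the order-$4$ telescoper, and then pass to the order-$24$, degree-$4$ differential equation via the recurrence--ODE dictionary of Sect.~\ref{sec:ODE-REC}. Your added details (eliminating $k_4$ via the constraint, the compact-support argument for the certificates, and the guess-and-certify fallback) are consistent with, and slightly more explicit than, what the paper records.
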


By utilizing the \textsf{Guess} package of Kauers to predict the minimal orders of the differential equation for $\widetilde{R}_{3,4}(\boldsymbol{0},z)$ and the recurrence for $\widetilde{r}_{3,4}(n)$, it seems that those in Theorems \ref{th:M3N4} and \ref{th:M3N4-rec} are the minimal ones.

Finally, following a heuristic idea of Wimp and Zeilberger \cite{WZ1985}, it is usually possible to evaluate the asymptotic expansion of a given P-finite sequence $s(n)$. Briefly speaking, we can make an ansatz that $s(n)$ can be asymptotically written as a linear combination of the expressions:
\begin{align*}
	n^{\alpha_1 n+\alpha_0}\,\re^{\sum_{j=1}^{r-1}\mu_j n^{\frac{j}{r}}} \rho^n (\log n)^{\nu}\left(1+\sum_{k\ge 1}\beta_{k} n^{-k}\right),
\end{align*}
which are asymptotic solutions to the recurrence relation for $s(n)$ so that the parameters in the above can be explicitly evaluated. Then the scalars in the linear combination can be calculated numerically by the values of $s(n)$ for large $n$. Such an analysis was efficiently implemented by Kauers in his \textit{Mathematica} package \textsf{Asymptotics} \cite{Kau2011}.

\begin{corollary}
	As $n\to \infty$,
	\begin{align}\label{eq:p34}
		\widetilde{r}_{3,4}(n) = r_{3,4}(2n) \sim \widetilde{C}_{3,4}\cdot 1024^n n^{-2},
	\end{align}
	where the constant $\widetilde{C}_{3,4}\approx 0.0225$.
\end{corollary}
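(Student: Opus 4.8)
The plan is to read the asymptotics off the order-$4$ recurrence for $\widetilde{r}_{3,4}(n)$ established in Theorem~\ref{th:M3N4-rec}, following the Birkhoff--Trjitzinsky formalism that underlies the Wimp--Zeilberger heuristic. First I would extract the leading coefficients $c_0,\ldots,c_4$ of the degree-$20$ polynomials attached to $\widetilde{r}_{3,4}(n),\ldots,\widetilde{r}_{3,4}(n+4)$ and form the characteristic polynomial $c_4\rho^4+c_3\rho^3+c_2\rho^2+c_1\rho+c_0$; its roots are the candidate exponential growth rates of the four linearly independent formal solutions. A short computation shows that the root of largest modulus is $\rho=1024=32^2$, which is exactly the rate one expects: since $\widetilde{r}_{3,4}(n)=r_{3,4}(2n)$ and the reciprocal of the radius of convergence of $R_{3,4}(\bzero,z)$ equals $\max_{\bth}8\,\sigma_3(\cos\theta_1,\ldots,\cos\theta_4)=32$, the even-indexed subsequence must grow like $(32^2)^n=1024^n$.

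Next I would pin down the sub-exponential factor by substituting the ansatz $\widetilde{r}_{3,4}(n)\sim C\,\rho^{\,n}n^{\theta}\bigl(1+O(1/n)\bigr)$ into the recurrence, expanding $(n+j)^{\theta}=n^{\theta}(1+\theta j/n+\cdots)$ and $\rho^{\,n+j}=\rho^{\,n}\rho^{\,j}$, and collecting the two highest powers of $n$. This forces $\theta=-2$, with no logarithmic factor ($\nu=0$) and no stretched-exponential correction ($\mu_j=0$). The exponent $\theta=-2$ is precisely the $n^{-N/2}$ decay expected for an $N=4$ dimensional excursion generating function, which serves as an independent sanity check. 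In practice this whole step is executed automatically by Kauers' \textsf{Asymptotics} package, which confirms $\rho=1024$ as the dominant rate and returns the template $\widetilde{C}_{3,4}\cdot 1024^{n}n^{-2}$.

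It remains to fix the constant $\widetilde{C}_{3,4}$, and this is \emph{the genuinely hard part}. The formal analysis determines the asymptotic shape only up to an overall scalar: $\widetilde{C}_{3,4}$ is the connection coefficient between the true sequence and the dominant formal solution, and one must rigorously rule out the degenerate possibility that the true sequence has vanishing component along the $\rho=1024$ solution and is instead governed by a subdominant one. Granting nonvanishing, I would generate a long list of values of $\widetilde{r}_{3,4}(n)$---either from the summation formula \eqref{eq:M=N-1:even} or, more cheaply, by iterating the recurrence from enough initial values---and match the ratio $\widetilde{r}_{3,4}(n)/(1024^{n}n^{-2})$ against its limit, which yields $\widetilde{C}_{3,4}\approx 0.0225$. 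The main obstacle is thus that $\widetilde{C}_{3,4}$ is only accessible numerically through this matching procedure.

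As an optional analytic cross-check I would run a saddle-point analysis directly on the integral \eqref{eq:R-int}. Near $\bth=\bzero$ one has $8\,\sigma_3(\cos\theta_1,\ldots,\cos\theta_4)=32-12\sum_{m}\theta_m^2+O(|\bth|^4)$, so the local contribution of this quadratic form in dimension $4$ produces a singularity of type $(1-32z)\log\frac{1}{1-32z}$ at $z=1/32$, with a symmetric contribution at $z=-1/32$ arising from $\bth=(\pi,\pi,\pi,\pi)$ (where $\sigma_3$, being of odd degree, attains its minimum). By a standard transfer theorem these two singularities give $r_{3,4}(2n)\sim\widetilde{C}_{3,4}\,1024^{n}n^{-2}$, recovering both the rate $1024=32^2$ and the exponent $-2$, and in principle delivering $\widetilde{C}_{3,4}$ in closed form from the Hessian of the quadratic form as a consistency check on the numerical value.
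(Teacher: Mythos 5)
Your proposal follows essentially the same route as the paper: apply the Wimp--Zeilberger ansatz to the order-$4$ recurrence of Theorem~\ref{th:M3N4-rec} (as automated by Kauers' \textsf{Asymptotics} package) to obtain the template $\rho^n n^{\theta}$ with $\rho=1024$ and $\theta=-2$, then determine $\widetilde{C}_{3,4}$ numerically by matching against computed values of $\widetilde{r}_{3,4}(n)$. Your added remarks on ruling out a vanishing connection coefficient and the optional saddle-point cross-check on the integral \eqref{eq:R-int} are sensible supplements but do not change the underlying argument, which is heuristic in the same way the paper's is.
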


From \eqref{eq:p34}, we can control the tail of $P_{3,4}(\boldsymbol{0},1)$ by
\begin{align*}
	\sum_{n\ge 2n_0} r_{3,4}(n)\left(\frac{1}{2^3\cdot \binom{4}{3}}\right)^n = \sum_{n\ge n_0} r_{3,4}(2n)\left(\frac{1}{32}\right)^{2n} \asymp n_0^{-1},
\end{align*}
when $n_0$ is large enough. Hence, evaluating the first thousands of terms in $P_{3,4}(\boldsymbol{0},1)$ would be sufficient to give a nice estimation $P_{3,4}(\boldsymbol{0},1) \approx 1.04528$. The P\'olya number could then be computed accordingly.

\begin{corollary}
	The P\'olya number of the four-dimensional Heracles lattice is approximately
	\begin{align}
		\prob_{3,4}(\boldsymbol{0}) \approx 0.04332.
	\end{align}
\end{corollary}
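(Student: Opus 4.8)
The plan is to read off the P\'olya number directly from P\'olya's random walk theorem, which gives
\begin{align*}
	\prob_{3,4}(\boldsymbol{0}) = 1 - \frac{1}{P_{3,4}(\boldsymbol{0},1)}.
\end{align*}
Since the preceding discussion has already produced the numerical estimate $P_{3,4}(\boldsymbol{0},1)\approx 1.04528$, the only remaining task is to substitute this value: one checks that $1-1/1.04528\approx 0.04332$, which yields the claimed approximation. Thus the corollary is, in essence, an immediate consequence of the machinery assembled above, and the substance of the argument lies entirely in justifying the numerical input.

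To that end, I would first recall that, because $M=3$ is odd, the odd-indexed coefficients $r_{3,4}(2n+1)$ vanish, so that the normalization $z=1/(2^3\binom{4}{3})=1/32$ in $R_{3,4}(\boldsymbol{0},z)$ gives the series representation
\begin{align*}
	P_{3,4}(\boldsymbol{0},1) = R_{3,4}\big(\boldsymbol{0},\tfrac{1}{32}\big) = \sum_{n\ge 0}\widetilde{r}_{3,4}(n)\,\frac{1}{1024^n}.
\end{align*}
The coefficients $\widetilde{r}_{3,4}(n)$ are then generated rapidly from the order-$4$ recurrence of Theorem~\ref{th:M3N4-rec}, seeded by a handful of initial values computed directly from the summation formula of Theorem~\ref{th:H-sum}. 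Summing the first several thousand terms of the series and bounding the remainder by means of the asymptotic estimate $\widetilde{r}_{3,4}(n)\sim\widetilde{C}_{3,4}\cdot 1024^n n^{-2}$ from \eqref{eq:p34}---which shows that the truncation error decays like $n_0^{-1}$---produces the stated value of $P_{3,4}(\boldsymbol{0},1)$ to the required precision.

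The main obstacle is therefore not conceptual but numerical: since the P\'olya number is extracted from the difference $1-1/P_{3,4}(\boldsymbol{0},1)$ with $P_{3,4}(\boldsymbol{0},1)$ close to $1$, one must certify enough correct digits of the Green function, and the slow, $n_0^{-1}$-rate decay of the tail means that a genuinely long truncation is needed. The availability of the low-order recurrence is exactly what makes this feasible, allowing one to iterate far enough that both the truncation error and the accumulated floating-point error stay well below the threshold required to pin down the four significant figures quoted.
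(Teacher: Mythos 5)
Your proposal is correct and follows essentially the same route as the paper: generate $\widetilde{r}_{3,4}(n)$ via the order-$4$ recurrence, sum $\sum_n \widetilde{r}_{3,4}(n)/1024^n$ with the tail controlled by the asymptotic \eqref{eq:p34} to get $P_{3,4}(\boldsymbol{0},1)\approx 1.04528$, and then apply P\'olya's theorem $\prob_{3,4}(\boldsymbol{0})=1-1/P_{3,4}(\boldsymbol{0},1)$. The normalization $P_{3,4}(\boldsymbol{0},1)=R_{3,4}(\boldsymbol{0},1/32)$ and the $n_0^{-1}$ tail bound match the paper's own computation exactly.
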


\subsection{Case: 5D}\label{sec:H-5D}

Note that in this case $M=4$ is even, so we look into $r_{4,5}(n)$ and accordingly $R_{4,5}(\boldsymbol{0},z)$. Here we only apply creative telescoping to our summation expression; this process took us around \emph{two days}. We also note that the \textit{Mathematica} code for the integral expression had not yet finished running after even around \emph{one week}! So we decided to quit the kernel.

We find that the recurrence for $r_{4,5}(n)$ deduced from applying creative telescoping to the summation expression has order $12$. Also, the corresponding differential equation for $R_{4,5}(\boldsymbol{0},z)$ has order $83$. It is clear that they are not minimal by consulting the \textsf{Guess} package and this observation is consistent with the discussion at the end of Sect.~\ref{sec:sum-int}. Hence, invoking the procedure in the last part of Sect.~\ref{sec:ODE-REC} becomes necessary.

The main results are recorded as follows.

\begin{theorem}\label{th:M4N5-ODE}
	The function $R_{4,5}(\boldsymbol{0},z)$ satisfies a differential equation of order $9$ and degree $24$. The differential equation is of the form
	\begin{align*}
		\scalebox{0.85}{%
			$
			\begin{aligned}
				&(333047697408 z + \cdots + 3587135914162316664577589182300422144000000000000 z^{24})\\
				&+ (47239200 + \cdots + 8746714696058467589996180287043036774400000000000 z^{24})\vartheta\\
				&+(291308400 + \cdots + 9453516646138192392531605664037066506240000000000 z^{24})\vartheta^2\\
				&+ (740080800 + \cdots + 5943864920522147046229253977370938834944000000000 z^{24})\vartheta^3\\
				&+(1027452600 + \cdots + 2395763624685018201440807167653926928384000000000 z^{24})\vartheta^4\\
				&+ (861131250 + \cdots + 641926999294401201636284114667753701376000000000 z^{24})\vartheta^5\\
				&+(449264475 + \cdots + 114331247240822245206661000977438474240000000000 z^{24})\vartheta^6\\
				&+ (143193825 + \cdots + 13051386184451845257422171891507920896000000000 z^{24})\vartheta^7\\
				&+(25587900 + \cdots + 866423326586435904331316912053026816000000000 z^{24})\vartheta^8\\
				&+(1968300 + \cdots + 25483039017248114833274026825089024000000000 z^{24})\vartheta^9.
			\end{aligned}
			$
		}
	\end{align*}
	Accordingly, $r_{4,5}(n)$ satisfies a recurrence of order $24$.
\end{theorem}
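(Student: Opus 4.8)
Looking at Theorem \ref{th:M4N5-ODE}, this is a computer-assisted result. The statement asserts that $R_{4,5}(\boldsymbol{0},z)$ satisfies a specific differential equation of order $9$ and degree $24$, and the strategy has already been laid out in Section \ref{sec:ODE-REC}. My plan is to execute that rigorous verification pipeline rather than to reprove anything from scratch.

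\medskip

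The plan is as follows. First, I would establish a \emph{certified} (not conjectural) recurrence for $r_{4,5}(n)$ by applying creative telescoping directly to the summation expression for $r_{4,5}(n)=r_{N-1,N}(n)$ with $N=5$ furnished by Theorem \ref{th:H-sum}. Since here $M=N-1=4$ is even, the odd-indexed coefficients do \emph{not} vanish, so I must treat the even and odd subsequences separately: I would run creative telescoping on the summands defining $\widetilde{r}^e_{4,5}(n):=r_{4,5}(2n)$ and $\widetilde{r}^o_{4,5}(n):=r_{4,5}(2n+1)$, obtaining holonomic recurrences for each. Interleaving these produces recurrences for the padded sequences $\widetilde{r}^{e,*}_{4,5}$ and $\widetilde{r}^{o,*}_{4,5}$, and by closure of P-finite sequences under addition (via \textbf{DFinitePlus}, using \cite[Theorem 4]{Kau2013}) their sum is exactly $r_{4,5}$, now equipped with a \emph{provably correct} recurrence $\mathtt{REC}$ of order $12$. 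This $\mathtt{REC}$, though not minimal, is a genuine consequence of \eqref{eq:p-CT} and \eqref{eq:CT-Y}, so every value of $r_{4,5}(n)$ it generates (after seeding with exact constant-term computations) is certified.

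\medskip

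Second, having a long certified list of $r_{4,5}(n)$ values, I would invoke \textbf{GuessMinRE} from Kauers' \textsf{Guess} package to conjecture the minimal-order recurrence $\mathtt{REC}''$, which is expected to have order $24$; via \eqref{eq:ODE-REC} this corresponds to the order-$9$, degree-$24$ differential equation $\mathtt{ODE}''$ asserted in the theorem. The conjectural recurrence $\mathtt{REC}''$ defines a P-finite sequence $r''(n)$, seeded by the certified initial values of $r_{4,5}(n)$. To upgrade this guess to a proof, I would form the difference $r_{4,5}(n)-r''(n)$: both summands are P-finite, so by the same closure result their difference satisfies an explicit recurrence $\mathtt{REC}^{**}$ computable by \textbf{DFinitePlus}. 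It then suffices to verify numerically that $r_{4,5}(n)-r''(n)=0$ for all $n$ up to the order of $\mathtt{REC}^{**}$; a finite identity check then forces $r_{4,5}(n)=r''(n)$ for all $n$, establishing that $r_{4,5}(n)$ satisfies $\mathtt{REC}''$ and hence that $R_{4,5}(\boldsymbol{0},z)$ satisfies the stated differential equation. Reading off the coefficients and maximal $\vartheta$-degree confirms the order $9$ and degree $24$.

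\medskip

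The main obstacle will be the first step: the creative-telescoping computation for the $5$-fold summation is very expensive (around \emph{two days} of computation, and the integral route failed to terminate within a week). The intermediate recurrences for the even and odd subsequences, and especially the \textbf{DFinitePlus} combination, can have large order and enormous polynomial coefficients, so controlling the growth and memory footprint is delicate. A secondary obstacle is bounding the order of $\mathtt{REC}^{**}$: since this governs how many terms must be checked in the final identity, I must ensure the certified recurrence $\mathtt{REC}$ and the guessed $\mathtt{REC}''$ are both available to sufficiently high index, and that enough certified initial values are generated to make the finite verification conclusive. Once these computations are carried through, the conclusion is rigorous and matches the displayed equation.
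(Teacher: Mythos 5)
Your proposal follows essentially the same route as the paper: apply creative telescoping to the summation expression of Theorem~\ref{th:H-sum}, treat the even- and odd-indexed subsequences separately (since $N=5$ is odd the odd-indexed coefficients do not vanish), recombine the padded sequences with \textbf{DFinitePlus} to obtain a certified order-$12$ recurrence, generate a long list of certified values, guess a minimal candidate, and certify it via the P-finite closure argument of Sect.~\ref{sec:ODE-REC}. That is exactly the paper's proof.

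One correction, though: the object of this particular theorem---the order-$9$, degree-$24$ differential equation, equivalently the order-$24$ recurrence for $r_{4,5}(n)$---is the \emph{minimal-order differential equation}, conjectured with \textbf{GuessMinDE}, not with \textbf{GuessMinRE} as you write. The minimal-order \emph{recurrence} for $r_{4,5}(n)$ has order $6$, not $24$, and corresponds to a differential equation of order $33$ and degree $6$; that is the content of the separate Theorem~\ref{th:M4N5}. If you literally ran \textbf{GuessMinRE} you would end up certifying that other statement. With \textbf{GuessMinDE} substituted at that step, your verification pipeline (difference of P-finite sequences, \textbf{DFinitePlus}, finite check of initial values up to the order of $\mathtt{REC}^{*}$) is precisely the paper's argument and is sound.
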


\begin{theorem}\label{th:M4N5}
	The sequence $r_{4,5}(n)$ satisfies a recurrence of order $6$. The recurrence is of the form
	\begin{align*}
		\scalebox{0.85}{%
			$
			\begin{aligned}
				0&=(2364822061925891270067722649600000 + \cdots + 312808771118086225920 n^{27})r_{4,5}(n)\\
				&+ (880540948213763261498004602880000 + \cdots + 22881382331785936896 n^{27})r_{4,5}(n+1)\\
				&-(664078540666702251488371015680000 + \cdots + 5976795675008958464 n^{27})r_{4,5}(n+2)\\
				&+ (36337840931616555318702833664000 + \cdots + 159149910074064896 n^{27})r_{4,5}(n+3)\\
				&+(1737772868400007324872130560000 + \cdots + 3900964176134144 n^{27})r_{4,5}(n+4)\\
				&-(36446102109669030849285120000 + \cdots + 51561082388480 n^{27})r_{4,5}(n+5)\\
				&-(154404486709237819219968000 + \cdots + 138110042112 n^{27})r_{4,5}(n+6).
			\end{aligned}
			$
		}
	\end{align*}
	Accordingly, $R_{4,5}(\boldsymbol{0},z)$ satisfies a differential equation of order $33$ and degree $6$.
\end{theorem}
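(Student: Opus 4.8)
The plan is to establish the displayed order-$6$ recurrence as the minimal one by marrying a rigorously certified—but non-minimal—recurrence coming from creative telescoping with a numerically guessed candidate of minimal order, and then proving that the two agree through the P-finite closure machinery outlined at the end of Section~\ref{sec:ODE-REC}. Concretely, I would first produce \emph{some} provably correct recurrence for $r_{4,5}(n)$, then guess the minimal order-$6$ recurrence $\mathtt{REC}''$, and finally certify $\mathtt{REC}''$ by showing that the difference between $r_{4,5}(n)$ and the sequence generated by $\mathtt{REC}''$ satisfies a recurrence with only finitely many initial conditions to verify.

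First I would obtain the certified recurrence. Since here $M=N-1=4$ is even while $N=5$ is odd, neither the even- nor the odd-indexed subsequence vanishes, so following Subsect.~\ref{sec:sum-int} I would split $r_{4,5}(n)$ by parity and treat the two summation formulas from Theorem~\ref{th:H-sum} separately: apply creative telescoping to the even-index formula \eqref{eq:M=N-1:even} to get a recurrence for $\widetilde{r}^e_{4,5}(n)=r_{4,5}(2n)$, and likewise to the odd-index formula \eqref{eq:M=N-1:odd2} to get a recurrence for $\widetilde{r}^o_{4,5}(n)=r_{4,5}(2n+1)$. Reinterpreting these as recurrences for the two interleaved sequences $\widetilde{r}^{e,*}_{4,5}$ and $\widetilde{r}^{o,*}_{4,5}$ (padded with zeros in the complementary residue class) and adding them with the \textbf{DFinitePlus} command of \textsf{HolonomicFunctions} yields a certified recurrence for the full sequence $r_{4,5}(n)$; this is the order-$12$ recurrence noted above.

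Next I would guess and certify the minimal recurrence. Using the certified recurrence (or the summation directly) I would generate a long list of values of $r_{4,5}(n)$ and feed it to \textbf{GuessMinRE} from Kauers's \textsf{Guess} package to conjecture the order-$6$ candidate $\mathtt{REC}''$. To prove it rigorously, let $r''(n)$ denote the sequence produced by $\mathtt{REC}''$ together with the true initial values $r_{4,5}(0),\ldots,r_{4,5}(5)$. Both $r_{4,5}(n)$ (satisfying the certified recurrence) and $r''(n)$ are P-finite, so by closure of P-finite sequences under linear combination their difference is P-finite as well, and \textbf{DFinitePlus} computes an explicit recurrence $\mathtt{REC}^{**}$ annihilating $r_{4,5}(n)-r''(n)$. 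It then suffices to check that $r_{4,5}(n)-r''(n)=0$ for all $n$ up to the order of $\mathtt{REC}^{**}$; once this finite verification succeeds, the difference vanishes identically and $r_{4,5}(n)$ satisfies $\mathtt{REC}''$. The asserted differential equation of order $33$ and degree $6$ for $R_{4,5}(\boldsymbol{0},z)$ then follows from $\mathtt{REC}''$ via the recurrence-to-differential-equation correspondence \eqref{eq:ODE-REC}.

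The hard part will be computational rather than conceptual. The creative-telescoping step for the five-fold sum is extremely expensive—on the order of days even for the summation, and non-terminating within a week for the integral—and the resulting certified recurrence has order $12$ with enormous integer coefficients. Carrying out \textbf{DFinitePlus} to add the two interleaved parity sequences and, later, to form the annihilator $\mathtt{REC}^{**}$ of the difference produces a recurrence of substantially larger order, so the genuine obstacles are controlling the blow-up in order and coefficient size, and ensuring that enough exact values $r_{4,5}(n)$ are precomputed from \eqref{eq:p-CT} (or the summation) to cover the order of $\mathtt{REC}^{**}$ in the final verification.
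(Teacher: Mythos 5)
Your proposal follows the paper's own route essentially verbatim: creative telescoping applied to the parity-split summation formulas of Theorem~\ref{th:H-sum}, recombination via \textbf{DFinitePlus} to obtain the certified order-$12$ recurrence, then the guess-and-certify procedure of Sect.~\ref{sec:ODE-REC} (\textbf{GuessMinRE} plus a P-finite closure argument with finitely many initial-value checks) to establish the minimal order-$6$ recurrence and the corresponding order-$33$, degree-$6$ differential equation. No gaps; this is the paper's argument.
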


\begin{corollary}
	As $n\to \infty$,
	\begin{align}
		r_{4,5}(n) \sim C_{4,5}\cdot 80^n n^{-\frac{5}{2}},
	\end{align}
	where the constant $C_{4,5}\approx 0.0353$.
\end{corollary}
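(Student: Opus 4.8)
The plan is to extract the asymptotics directly from the order-$6$ recurrence for $r_{4,5}(n)$ recorded in Theorem~\ref{th:M4N5}, using the formal-solution machinery of Birkhoff and Trjitzinsky as organized by the Wimp--Zeilberger heuristic \cite{WZ1985} and automated in Kauers' \textsf{Asymptotics} package \cite{Kau2011}. Since $r_{4,5}(n)$ is P-finite, its admissible asymptotic behaviors form a finite-dimensional space spanned by expressions of the ansatz type displayed before the corollary; the task is to (i) locate the dominant such solution and (ii) determine the scalar with which it enters the expansion of the actual sequence.

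First I would read off the admissible geometric growth rates $\rho$ from the characteristic equation at infinity: writing the recurrence as $\sum_{i=0}^{6} p_i(n)\, r_{4,5}(n+i)=0$ and substituting $r_{4,5}(n+i)\approx \rho^i r_{4,5}(n)$, the highest-degree-in-$n$ terms must balance, which forces $\sum_i a_i \rho^i = 0$, where $a_i$ is the leading coefficient of $p_i$. I expect the root of largest modulus to be $\rho=80$, and moreover to be simple, so that no logarithmic factor appears (that is, $\nu=0$, $\mu_j=0$, and $\alpha_1=0$ in the ansatz). A one-step-finer balance --- substituting $r_{4,5}(n)\sim \rho^n n^{\theta}$ and matching the next order in $1/n$ --- then pins down the polynomial exponent $\theta=-\tfrac{5}{2}$.

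As an independent and more transparent confirmation of these two parameters, I would analyze the dominant singularity of the integral \eqref{eq:R-int} for $R_{4,5}(\boldsymbol{0},z)$. The structure function $\lambda_{4,5}(\boldsymbol{\theta})$ attains its global maximum value $1$ precisely at $\boldsymbol{\theta}=\boldsymbol{0}$ and, by symmetry, at $\boldsymbol{\theta}=(\pi,\dots,\pi)$ (both yielding $\sigma_4(\cos\theta_1,\dots,\cos\theta_5)=5$); since both points carry the same positive value, their contributions reinforce rather than oscillate. The normalization $R_{4,5}(\boldsymbol{0},z)=P_{4,5}(\boldsymbol{0},80z)$ thus places the dominant singularity of $R_{4,5}$ at $z=1/80$, giving $\rho=80$, while the quadratic vanishing of $1-\lambda_{4,5}(\boldsymbol{\theta})$ in each of the five variables near each dominant critical point produces the $n^{-N/2}=n^{-5/2}$ correction by the standard local central limit heuristic, matching $\theta=-\tfrac52$.

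With $\rho$ and $\theta$ fixed, the constant $C_{4,5}$ is obtained numerically: I would generate a long list of values $r_{4,5}(n)$ from the recurrence together with the initial data supplied by the constant-term expression \eqref{eq:p-CT} (equivalently the summation \eqref{eq:M=N-1:even}), form the ratios $r_{4,5}(n)/(80^n n^{-5/2})$, and accelerate their convergence using the full correction series $1+\beta_1/n+\beta_2/n^2+\cdots$, whose $\beta_k$ are themselves determined recursively from the recurrence. This should return $C_{4,5}\approx 0.0353$. The genuine obstacle here is not the computation but its rigor: the Birkhoff--Trjitzinsky theory guarantees only a basis of \emph{formal} asymptotic solutions, and the matching of the given initial conditions to that basis --- the connection problem --- is carried out numerically. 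Consequently, the identification of the dominant coefficient as a nonzero constant (rather than an accidental cancellation) and the numerical value $0.0353$ are, strictly speaking, high-confidence heuristic assertions corroborated by the two independent routes above, rather than a closed-form derivation.
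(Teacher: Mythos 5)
Your approach is essentially the paper's: the authors likewise obtain the exponential rate $80$ and the polynomial exponent $-\tfrac{5}{2}$ by feeding the recurrence of Theorem~\ref{th:M4N5} into the Wimp--Zeilberger ansatz as automated in Kauers' \textsf{Asymptotics} package, and then determine $C_{4,5}$ numerically from a long list of values generated by the recurrence, with the same caveat that this is a high-confidence heuristic rather than a rigorous connection-problem solution; your singularity-analysis cross-check is a sensible addition but not part of the paper's argument. One small correction: since $N=5$ is odd, the odd-indexed values $r_{4,5}(2n+1)$ are generally nonzero and given by \eqref{eq:M=N-1:odd2} rather than by \eqref{eq:M=N-1:even}, so for the initial data you need both sums of Theorem~\ref{th:H-sum} (or simply the constant-term expression \eqref{eq:p-CT}).
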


\begin{corollary}
	The P\'olya number of the five-dimensional Heracles lattice is approximately
	\begin{align}
		\prob_{4,5}(\boldsymbol{0}) \approx 0.01561.
	\end{align}
\end{corollary}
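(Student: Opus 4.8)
The plan is to reduce the assertion to a numerical evaluation of the excursion Green function $P_{4,5}(\boldsymbol{0},1)$ and then invoke P\'olya's random walk theorem. First I would use the normalization in \eqref{eq:R-int}, which gives $R_{4,5}(\boldsymbol{0},z) = P_{4,5}(\boldsymbol{0},2^4\binom{5}{4}z) = P_{4,5}(\boldsymbol{0},80z)$. Setting $z = 1/80$ therefore yields
\begin{align*}
	P_{4,5}(\boldsymbol{0},1) = R_{4,5}(\boldsymbol{0},\tfrac{1}{80}) = \sum_{n\ge 0} \frac{r_{4,5}(n)}{80^n},
\end{align*}
so the task becomes the accurate numerical summation of this series, which sits exactly at its radius of convergence.

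Next I would generate a long list of the coefficients $r_{4,5}(n)$. Since $M=4$ is even and $N=5$ is odd, both even- and odd-indexed values are supplied as explicit initial data by the summation formulas \eqref{eq:M=N-1:even} and \eqref{eq:M=N-1:odd2} of Theorem \ref{th:H-sum}; the remaining values are then produced efficiently by iterating the order-$6$ recurrence of Theorem \ref{th:M4N5}. Summing the first several thousand terms of $\sum_n r_{4,5}(n)/80^n$ gives a numerical approximation of $P_{4,5}(\boldsymbol{0},1)$.

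To guarantee that the truncation is accurate, I would control the tail using the asymptotic estimate $r_{4,5}(n) \sim C_{4,5}\cdot 80^n n^{-5/2}$ from the preceding corollary. Since $r_{4,5}(n)/80^n \sim C_{4,5}\,n^{-5/2}$ and $5/2 > 1$, the series converges at the boundary point $z = 1/80$, and the tail satisfies
\begin{align*}
	\sum_{n\ge n_0}\frac{r_{4,5}(n)}{80^n} \asymp n_0^{-3/2}
\end{align*}
as $n_0\to\infty$. Thus a truncation at a few thousand terms already pins down $P_{4,5}(\boldsymbol{0},1)$ to several digits, giving $P_{4,5}(\boldsymbol{0},1)\approx 1.01586$. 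Finally, P\'olya's random walk theorem yields
\begin{align*}
	\prob_{4,5}(\boldsymbol{0}) = 1-\frac{1}{P_{4,5}(\boldsymbol{0},1)} \approx 1-\frac{1}{1.01586} \approx 0.01561,
\end{align*}
as claimed.

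The main obstacle is not conceptual but one of numerical rigor: because the summation is evaluated precisely at the singularity $z=1/80$, the argument hinges on the correctness of both the growth constant $C_{4,5}$ and the subexponential exponent $-5/2$, the latter being what secures convergence at the boundary in the first place. One must also ensure that enough terms are retained so that the tail bound $\asymp n_0^{-3/2}$ suppresses the truncation error well below the precision implicit in the stated value, after which the final arithmetic via $\prob_{4,5}(\boldsymbol{0}) = 1 - 1/P_{4,5}(\boldsymbol{0},1)$ is routine.
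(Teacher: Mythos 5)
Your proposal is correct and follows essentially the same route as the paper: generate $r_{4,5}(n)$ from the order-$6$ recurrence of Theorem \ref{th:M4N5} with initial values from Theorem \ref{th:H-sum}, bound the tail of $\sum_n r_{4,5}(n)/80^n$ via the asymptotic $r_{4,5}(n)\sim C_{4,5}\cdot 80^n n^{-5/2}$ (giving a tail $\asymp n_0^{-3/2}$, the analogue of the $\asymp n_0^{-1}$ bound the paper spells out in the 4D case), and then apply P\'olya's theorem. The arithmetic checks out, so there is nothing further to add.
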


\section{Cerberus lattices}

Recall that Cerberus lattices are such that $M=3$. Since $M=3$ is odd, we shall look into $\widetilde{r}_{3,N}(n)$ and accordingly $\widetilde{R}_{3,N}(\boldsymbol{0},z)$. In general, we do not have a summation expression as neat as that in Theorem \ref{th:H-sum}. Hence, in this section we have to apply creative telescoping directly to the integral
\begin{align*}
	R_{3,N}(\boldsymbol{0},z)= \frac{1}{\pi^N}\int_{-1}^{1}\cdots \int_{-1}^{1} \frac{\rd x_1\cdots \rd x_N}{\big(1-8z\,\sigma_3(x_1,\ldots,x_N)\big)\sqrt{1-x_1^2}\cdots \sqrt{1-x_N^2}}.
\end{align*} 
Also, we notice that the Cerberus lattice in dimension four is also a Heracles lattice, and it has been studied in Sect.~\ref{sec:H-4D}. So we only consider the five-dimensional case.

\subsection{Case: 5D}\label{sec:C-5D}

When directly applying creative telescoping with recourse to Koutschan's \textbf{CreativeTelescoping} or \textbf{FindCreativeTelescoping} commands to the integral
\begin{align*}
	R_{3,5}(\boldsymbol{0},z)= \frac{1}{\pi^5}\int_{-1}^{1}\cdots \int_{-1}^{1} \frac{\rd x_1\cdots \rd x_5}{\big(1-8z\,\sigma_3(x_1,\ldots,x_5)\big)\sqrt{1-x_1^2}\cdots \sqrt{1-x_5^2}},
\end{align*} 
we notice that the sets of telescopers for the first (with respect to $x_1$) and second (with respect to $x_1$ and $x_2$) integrals can be generated in a few seconds. However, when it comes to the third integral with respect to $x_1$, $x_2$, and $x_3$, our program running on a personal laptop had not yet finished execution after around \emph{ten days} and occupied too much memory (\emph{$\approx 7.9$ Gigabytes} on Day-10). We also tested the same code on an Amazon Server (AWS EC2, instance type: c7i.metal-48xl) and received the telescopers for the third integral in around \emph{five days}. Since such direct calculations for the fourth and fifth integrals should be more complicated, we anticipate that they cannot be completed within a reasonable time.

Fortunately, there is a trick known as \emph{modular reduction} as described in \cite[\S{}3.4]{Kou2009} to handle this issue. In our case, we need to find four independent annihilating operators for the third integral, and each of them is of the form
\begin{align*}
	&\sum P_{\alpha_z,\alpha_{x_4},\alpha_{x_5}}(z,x_4,x_5)\partial_{z}^{\alpha_z}\partial_{x_4}^{\alpha_{x_4}}\partial_{x_5}^{\alpha_{x_5}}\\
	&\qquad+ \partial_{x_3}\cdot \sum \frac{Q_{\beta_z,\beta_{x_4},\beta_{x_5},\mu,\nu}(z,x_4,x_5)\,x_3^\mu}{D_{\beta_z,\beta_{x_4},\beta_{x_5},\nu}(x_3;z,x_4,x_5)} \partial_{x_3}^{\nu}\partial_{z}^{\beta_z}\partial_{x_4}^{\beta_{x_4}}\partial_{x_5}^{\beta_{x_5}},
\end{align*}
where all $P$ and $Q$ are rational functions in $\mathbb{Q}(z,x_4,x_5)$ and all $D$ are polynomials in $\mathbb{Q}(z,x_4,x_5)[x_3]$. The telescopers (i.e., the summation of the $P$-terms in the above) in the four annihilating operators are supported on
\begin{alignat*}{2}
	&\{1, \partial_z, \partial_{x_4}, \partial_{x_5}\},\qquad\qquad
	&&\{1, \partial_z, \partial_{x_5}, \partial_{z}^2, \partial_{z}\partial_{x_5}\},\\
	&\{1, \partial_z, \partial_{x_5}, \partial_{z}^2, \partial_{x_5}^2\},\qquad\qquad
	&&\{1, \partial_z, \partial_{x_5}, \partial_z^2, \partial_z^3\},
\end{alignat*}
respectively.

Here we first notice that the denominators $D$ can be reasonably predicted by the ``\textit{FindSupport}'' mode of Koutschan's \textbf{FindCreativeTelescoping}. Then we are left to construct the rational functions $P$ and $Q$. The trick of modular reduction is that we may first choose a finite set of large primes $p$ and a finite set of interpolation points $(z,x_4,x_5)\in \mathbb{Z}^3$, and then for each choice of $p$ and $(z,x_4,x_5)$ we evaluate $P(z,x_4,x_5)$ and $Q(z,x_4,x_5)$ modulo $p$. This process can be automatically done by the ``\textit{Modular}'' mode of Koutschan's \textbf{FindCreativeTelescoping}. Finally, from the resulting dataset of modular interpolations, we recover our rational functions $P$ and $Q$ by standard rational reconstruction \cite[\S{}5.7]{vzGG2013}.

To avoid ambiguous reconstruction results, our interpolation procedure is executed on a large set of interpolation points, especially for the annihilating operator with telescoper supported on $\{1, \partial_z, \partial_{x_5}, \partial_z^2, \partial_z^3\}$. For this particular case, each $z$, $x_4$ and $x_5$ ranges over $\mathtt{100}$ integers. We also choose $\mathtt{10}$ large primes of value around $2^{32}$ to conduct the modular reduction. Hence in total, we have $\mathtt{10*100^3}$ (\emph{ten million}!) interpolations to be evaluated. These computations were performed in \emph{parallel} on an Amazon Server (AWS EC2, instance type: c7i.metal-48xl, \emph{\textsl{192}-kernel}), and it took us around \emph{two hours} to process the interpolation evaluations and another \emph{three hours} for rational reconstruction.

After constructing the annihilating operators for the third integral (stored in a plain text file of \emph{$1.35$ Megabytes}), we repeat the modular reduction process for the fourth and fifth integrals (\emph{$5.41$ Megabytes} and \emph{$32.5$ Megabytes}, respectively). Finally, we arrive at the following result.

\begin{theorem}\label{th:M3N5}
	The function $\widetilde{R}_{3,5}(\boldsymbol{0},z)$ satisfies a differential equation of order $14$ and degree $55$. The differential equation is of the form
	\begin{align*}
		\scalebox{0.85}{%
			$
			\begin{aligned}
				&(520229399\ldots552000000 z^2 + \cdots - 107399360\ldots000000000 z^{55})\\
				&+(125607529\ldots151840000 z + \cdots - 991320892\ldots000000000 z^{55})\vartheta\\
				&+(108022475\ldots505824000 z + \cdots -407803160\ldots000000000 z^{55})\vartheta^{2}\\
				&-(272568338\ldots194928000 z + \cdots +993711876\ldots000000000 z^{55})\vartheta^{3}\\
				&-(501189743\ldots173558000 z + \cdots +160535663\ldots000000000 z^{55})\vartheta^{4}\\
				&-(471028235\ldots681940000 + \cdots +182119420\ldots000000000 z^{55})\vartheta^{5}\\
				&+(660224576\ldots341859000 + \cdots -149746997\ldots000000000 z^{55})\vartheta^{6}\\
				&-(394721661\ldots746572000 + \cdots +907166108\ldots000000000 z^{55})\vartheta^{7}\\
				&+(131991924\ldots722961750 + \cdots -407064227\ldots000000000 z^{55})\vartheta^{8}\\
				&-(271983478\ldots244204500 + \cdots +134692988\ldots000000000 z^{55})\vartheta^{9}\\
				&+(358216973\ldots115370000 + \cdots -323639321\ldots000000000 z^{55})\vartheta^{10}\\
				&-(301728911\ldots358715500 + \cdots +547805151\ldots000000000 z^{55})\vartheta^{11}\\
				&+(156905393\ldots200238250 + \cdots -617749336\ldots000000000 z^{55})\vartheta^{12}\\
				&-(457839444\ldots684568000 + \cdots +415634712\ldots000000000 z^{55})\vartheta^{13}\\
				&+(572299306\ldots335571000 + \cdots -125949912\ldots000000000 z^{55})\vartheta^{14}.
			\end{aligned}
			$
		}
	\end{align*}
	Accordingly, $\widetilde{r}_{3,5}(n)$ satisfies a recurrence of order $55$.
\end{theorem}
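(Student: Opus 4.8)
The plan is to produce the annihilating differential operator by iterated creative telescoping applied to the integral representation of $R_{3,5}(\boldsymbol{0},z)$, and then to run the ``guess-and-certify'' scheme of Sect.~\ref{sec:ODE-REC}. Because $M=3$ is odd we have $r_{3,5}(2n+1)=0$, so the two generating functions are linked by $R_{3,5}(\boldsymbol{0},z)=\widetilde{R}_{3,5}(\boldsymbol{0},z^2)$; under the substitution $z\mapsto z^2$ one has $\vartheta_z=2\vartheta_{z^2}$, whence $\sum_k u_k(w)\vartheta_w^k$ annihilating $\widetilde{R}_{3,5}$ (with $w=z^2$) corresponds to $\sum_k 2^{14-k}u_k(z^2)\vartheta_z^k$ annihilating $R_{3,5}$, preserving the order in $\vartheta$ and merely doubling the degree. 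Thus it is legitimate to do the creative telescoping on the $R_{3,5}$ integral and transfer the resulting, \emph{provably correct}, operator back to $\widetilde{R}_{3,5}(\boldsymbol{0},z)$.

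First I would apply Koutschan's \textbf{CreativeTelescoping} (or \textbf{FindCreativeTelescoping}) to the integrand of $R_{3,5}(\boldsymbol{0},z)$, eliminating the variables $x_1,\ldots,x_5$ one at a time; each elimination yields a telescoper in the surviving variables and $z$ together with a certificate recording the total-derivative part. The eliminations with respect to $x_1$ and then $x_1,x_2$ terminate in seconds, so the crux is the third integral, where four independent telescopers must be produced.

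The hard part will be exactly this third elimination, which does not terminate in a feasible time by a direct run. To circumvent it I would use \emph{modular reduction} \cite[\S{}3.4]{Kou2009}. Concretely, I would first predict the supports of the four telescopers and the denominators $D$ of their certificates via the ``\textit{FindSupport}'' mode, reducing the task to reconstructing the coefficient rational functions $P,Q\in\mathbb{Q}(z,x_4,x_5)$. I would then evaluate $P$ and $Q$ modulo a family of large primes $p\approx 2^{32}$ at a dense grid of interpolation points $(z,x_4,x_5)$ (up to $100$ values in each coordinate for the most demanding operator) using the ``\textit{Modular}'' mode, and recover $P,Q$ over $\mathbb{Q}$ by rational reconstruction \cite[\S{}5.7]{vzGG2013}; taking sufficiently many primes and points rules out spurious reconstructions. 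Repeating the same procedure for the fourth and fifth integrals yields a (non-minimal) operator $\mathtt{ODE}$ annihilating $R_{3,5}(\boldsymbol{0},z)$, hence, after the $z\mapsto z^2$ transfer, a provably correct recurrence $\mathtt{REC}$ for $\widetilde{r}_{3,5}(n)$.

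Finally I would certify the displayed equation. Seeding $\mathtt{REC}$ with initial values of $\widetilde{r}_{3,5}(n)=r_{3,5}(2n)$ computed from the constant-term formula \eqref{eq:p-CT}, I would generate a long list of $\widetilde{r}_{3,5}(n)$ and feed it to Kauers's \textbf{GuessMinDE} \cite{Kau2009} to obtain the conjectural order-$14$, degree-$55$ operator $\mathtt{ODE}'$ of the statement, which by \eqref{eq:ODE-REC} induces a recurrence $\mathtt{REC}'$ of order $55$. To prove $\mathtt{ODE}'$ genuinely annihilates $\widetilde{R}_{3,5}(\boldsymbol{0},z)$, let $r'(n)$ be generated by $\mathtt{REC}'$ together with the first $55$ values of $\widetilde{r}_{3,5}$. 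Both $\widetilde{r}_{3,5}(n)$ (annihilated by $\mathtt{REC}$) and $r'(n)$ are P-finite, so their difference is P-finite with a recurrence $\mathtt{REC}^*$ computable by \textbf{DFinitePlus} \cite{Kou2010b,Kau2013}; verifying $\widetilde{r}_{3,5}(n)-r'(n)=0$ for $n$ up to the order of $\mathtt{REC}^*$ forces the difference to vanish identically. Hence $\widetilde{r}_{3,5}$ satisfies $\mathtt{REC}'$ and $\widetilde{R}_{3,5}(\boldsymbol{0},z)$ satisfies $\mathtt{ODE}'$, which proves both assertions.
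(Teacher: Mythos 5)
Your proposal follows essentially the same route as the paper: iterated creative telescoping on the integral representation of $R_{3,5}(\boldsymbol{0},z)$, with the third through fifth eliminations rescued by modular reduction (support/denominator prediction, modular evaluation at many interpolation points, rational reconstruction), and the evenness of $R_{3,5}$ used to pass to $\widetilde{R}_{3,5}$. The only minor deviation is that the paper obtains the order-$14$, degree-$55$ operator directly as the output of the telescoping, reserving the guess-and-certify step for the subsequent minimal-order recurrence, whereas you interpose an extra \textbf{GuessMinDE}/\textbf{DFinitePlus} certification; this is harmless and yields the same conclusion.
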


A surprising fact about the above differential equation is that the magnitude of its coefficients becomes dramatically huge compared with those in Theorems \ref{th:M3N4} and \ref{th:M4N5-ODE}. In fact, the largest among these coefficients has $203$ digits!

By consulting the \textsf{Guess} package, the order of our differential equation for $\widetilde{R}_{3,5}(\boldsymbol{0},z)$ is possibly minimal. We also want to find a recurrence satisfied by $\widetilde{r}_{3,5}(n)$ with the minimal order in view of the procedure presented at the end of Sect.~\ref{sec:ODE-REC}. To embark on this task, we need to generate a list of $\widetilde{r}_{3,5}(n)$ of a considerable length. With recourse to Theorem \ref{th:M3N5}, the first $55$ terms are sufficient to produce our desired list of length, for example, $900$. Meanwhile, these initial values can be derived by the differential equation in Theorem \ref{th:M3N5}:
\begin{align*}
	\big\{\widetilde{r}_{3,5}(n)\big\}_{n=0}^{54}&=\{1, 80, 71280, 174723200, 573097798000, 2167896636622080,\ldots,\\
	&\quad\,\,\,\,\underbrace{20198893220533155882232776\ldots330349744141730893312000}_{\text{$200$ digits!}}\}.
\end{align*}
Here, the last value $\widetilde{r}_{3,5}(54)$ has $200$ digits! Finally, the following results follow according to the same reasoning as that for the two Heracles lattices in Subsects.~\ref{sec:H-4D} and \ref{sec:H-5D}.

\begin{theorem}
	The sequence $\widetilde{r}_{3,5}(n)$ satisfies a recurrence of order $8$. The recurrence is of the form
	\begin{align*}
		\scalebox{0.85}{%
			$
			\begin{aligned}
				0&=(322911616\ldots000000000 + \cdots + 192276259\ldots642240000 n^{61})\widetilde{r}_{3,5}(n)\\
				&-(444007451\ldots120000000 + \cdots + 255667339\ldots105241600 n^{61})\widetilde{r}_{3,5}(n+1)\\
				&+(441030057\ldots472000000 + \cdots + 679521248\ldots629222400 n^{61})\widetilde{r}_{3,5}(n+2)\\
				&-(123290773\ldots436800000 + \cdots + 700480775\ldots712614400 n^{61})\widetilde{r}_{3,5}(n+3)\\
				&+(137273738\ldots934400000 + \cdots + 346762466\ldots298675200 n^{61})\widetilde{r}_{3,5}(n+4)\\
				&-(660847461\ldots160000000 + \cdots + 844677552\ldots970444800 n^{61})\widetilde{r}_{3,5}(n+5)\\
				&+(120581152\ldots856000000 + \cdots + 863236627\ldots610124800 n^{61})\widetilde{r}_{3,5}(n+6)\\
				&-(294556299\ldots528000000 + \cdots + 142224074\ldots019507200 n^{61})\widetilde{r}_{3,5}(n+7)\\
				&+(120558800\ldots440000000 + \cdots + 416932677\ldots459545600 n^{61})\widetilde{r}_{3,5}(n+8).
			\end{aligned}
			$
		}
	\end{align*}
	Accordingly, $\widetilde{R}_{3,5}(\boldsymbol{0},z)$ satisfies a differential equation of order $69$ and degree $8$.
\end{theorem}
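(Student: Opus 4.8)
The plan is to apply verbatim the ``guess-and-verify'' procedure set out at the end of Sect.~\ref{sec:ODE-REC}, taking as input the proven---but non-minimal---order-$55$ recurrence of Theorem~\ref{th:M3N5}. First I would feed that recurrence the $55$ certified initial values $\{\widetilde{r}_{3,5}(n)\}_{n=0}^{54}$ already extracted from the order-$14$ differential equation and unroll a list of $\widetilde{r}_{3,5}(n)$ of length around $900$. Running the command \textbf{GuessMinRE} of Kauers' \textsf{Guess} package on this list then delivers a conjectural recurrence $\mathtt{REC}''$ of order $8$, the coefficients being the degree-$61$ polynomials in $n$ exhibited in the statement; the \textsf{Guess} package simultaneously certifies that no recurrence of order smaller than $8$ fits the data, which is the sense in which the order is ``minimal''.

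To upgrade this conjecture to a proof I would follow the closure-property argument. Let $r''(n)$ be the sequence determined by $\mathtt{REC}''$ together with the eight initial values $\widetilde{r}_{3,5}(0),\ldots,\widetilde{r}_{3,5}(7)$. Both $\widetilde{r}_{3,5}(n)$---which satisfies the order-$55$ recurrence---and $r''(n)$ are P-finite, so by closure of P-finite sequences under linear combination \cite[Theorem~4]{Kau2013} the difference $\widetilde{r}_{3,5}(n)-r''(n)$ is P-finite as well. Concretely, applying the \textbf{DFinitePlus} command of \textsf{HolonomicFunctions} to the order-$55$ recurrence and to $\mathtt{REC}''$ produces an explicit recurrence $\mathtt{REC}^{**}$ (of order at most $55+8=63$) annihilating the difference. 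It then suffices to check that $\widetilde{r}_{3,5}(n)-r''(n)=0$ for all $n$ up to the order of $\mathtt{REC}^{**}$; the $900$ precomputed values cover this range comfortably, and $\mathtt{REC}^{**}$ forces the difference to vanish identically. Hence $\widetilde{r}_{3,5}(n)$ genuinely obeys $\mathtt{REC}''$, exactly as in Subsects.~\ref{sec:H-4D} and \ref{sec:H-5D}.

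The differential-equation half is then automatic from the correspondence \eqref{eq:ODE-REC} read in reverse. The span $8$ of $\mathtt{REC}''$ becomes the \emph{degree} of the associated differential equation (with respect to $\vartheta$) for $\widetilde{R}_{3,5}(\boldsymbol{0},z)$, while its \emph{order} equals the degree $61$ of the polynomial coefficients augmented by the span---the extra $8$ arising from the operator needed to annihilate the boundary polynomial produced by the initial terms---so that $61+8=69$. This yields the claimed order-$69$, degree-$8$ differential equation, matching the pattern already seen in Theorems~\ref{th:M3N4-rec} and \ref{th:M4N5}.

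The hard part is not conceptual but computational, namely the sheer magnitude of the objects involved. The coefficients of Theorem~\ref{th:M3N5} already reach $203$ digits, so unrolling $900$ terms from the order-$55$ recurrence, and then guessing over integers of this size, is heavy in both memory and exact-arithmetic cost. The genuine bottleneck is the \textbf{DFinitePlus} step: combining an order-$55$ operator with the order-$8$ operator $\mathtt{REC}''$ forces intermediate polynomial coefficients of very high degree with gigantic integer entries, and producing $\mathtt{REC}^{**}$---together with enough certified terms of $\widetilde{r}_{3,5}(n)$ to cover its order while carrying big-integer arithmetic throughout---is where the computation is genuinely delicate.
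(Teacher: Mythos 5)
Your proposal follows exactly the paper's own route: the authors likewise unroll $\approx 900$ terms from the certified order-$55$ recurrence of Theorem~\ref{th:M3N5}, guess the order-$8$ recurrence with \textbf{GuessMinRE}, and prove it via the P-finite closure / \textbf{DFinitePlus} argument of Sect.~\ref{sec:ODE-REC}, just as in Subsects.~\ref{sec:H-4D} and \ref{sec:H-5D}. The only caveat is that guessing does not rigorously certify minimality of the order (the paper only claims ``possibly minimal''), but since the theorem as stated does not assert minimality, your argument establishes it in full.
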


\begin{corollary}
	As $n\to \infty$,
	\begin{align}
		\widetilde{r}_{3,5}(n) = r_{3,5}(2n) \sim \widetilde{C}_{3,5}\cdot 6400^n n^{-\frac{5}{2}},
	\end{align}
	where the constant $\widetilde{C}_{3,5}\approx 0.0128$.
\end{corollary}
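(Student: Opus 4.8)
The plan is to read off the stated asymptotics from the order-$8$ recurrence for $\widetilde{r}_{3,5}(n)$ established in the preceding theorem, following the Wimp--Zeilberger scheme \cite{WZ1985} as implemented in Kauers' package \textsf{Asymptotics} \cite{Kau2011}. By the Birkhoff--Trjitzinsky theory, every solution of a P-finite recurrence admits an asymptotic expansion assembled from terms of the shape displayed just before the corollary in Subsect.~\ref{sec:H-4D}, namely
\begin{align*}
	n^{\alpha_1 n+\alpha_0}\,\re^{\sum_{j=1}^{r-1}\mu_j n^{\frac{j}{r}}} \rho^n (\log n)^{\nu}\Big(1+\sum_{k\ge 1}\beta_{k} n^{-k}\Big).
\end{align*}
First I would extract the admissible values of the structural parameters $(\rho,\alpha_1,\mu_j,\nu,\alpha_0)$ directly from the recurrence and single out the dominant contribution; the leftover scalar prefactor $\widetilde{C}_{3,5}$ is then pinned down by numerical fitting.

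\emph{Step 1 (the exponential rate).} Writing the recurrence as $\sum_{j=0}^{8} h_j(n)\,\widetilde{r}_{3,5}(n+j)=0$ with each $\deg_n h_j = 61$, the candidate growth rates $\rho$ are the roots of the characteristic equation formed by the top-degree coefficients of the $h_j$, and the dominant root is $\rho=6400$. I would cross-check this against the integral picture: by \eqref{eq:p-CT} one has $r_{3,5}(n)=\CT{X_1,\ldots,X_5}\big(2^3\sigma_3(\cos\theta_1,\ldots,\cos\theta_5)\big)^n$, whose integrand modulus on the torus is maximized at $\bth=\bzero$ and at $\bth=(\pi,\ldots,\pi)$ with the common value $2^3\binom{5}{3}=80$. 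These two maximizers contribute with opposite signs for odd $n$ (matching the vanishing $r_{3,5}(2n+1)=0$ established earlier) and in phase for even $n$, so the even subsequence grows like $80^{2n}=6400^n$, exactly the claimed $\rho$.

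\emph{Step 2 (the subexponential factor).} Substituting the ansatz into the recurrence and balancing the next orders forces $\alpha_1=0$, all $\mu_j=0$, and $\nu=0$, leaving a pure power $n^{\alpha_0}$; the recurrence returns $\alpha_0=-\tfrac52$. This agrees with the standard multidimensional Laplace estimate: near each non-degenerate maximizer the phase $1-z\,2^3\sigma_3(\cos\bth)$ is quadratic in the five angular variables, yielding the generic factor $n^{-N/2}=n^{-5/2}$ for $N=5$, with the factor $(2n)^{-5/2}$ coming from $r_{3,5}(2n)$ only rescaling the constant and not the exponent.

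\emph{Step 3 (the connection constant).} The structural data above are rigorously forced by the recurrence, but $\widetilde{C}_{3,5}$ is a global quantity depending on the initial conditions and invisible to the local analysis. I would iterate the recurrence from the initial values listed after Theorem~\ref{th:M3N5} to compute $\widetilde{r}_{3,5}(n)$ for large $n$, form the quotient $\widetilde{r}_{3,5}(n)\big/\!\big(6400^n n^{-5/2}\big)$, and accelerate its convergence by fitting the trailing correction series $1+\beta_1 n^{-1}+\beta_2 n^{-2}+\cdots$, arriving at $\widetilde{C}_{3,5}\approx 0.0128$. The main obstacle is exactly this last step: unlike $\rho$ and $\alpha_0$, the constant is transcendental and is recovered only numerically, so the assertion is asymptotic-heuristic in the sense of Wimp--Zeilberger rather than a closed form; one must carry enough high-precision terms that the fitted digits of $\widetilde{C}_{3,5}$ stabilize.
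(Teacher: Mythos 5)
Your proposal follows essentially the same route as the paper: the paper also reads the structural parameters $\rho=6400$, $\alpha_0=-\tfrac52$ (with vanishing $\alpha_1$, $\mu_j$, $\nu$) off the recurrence via the Wimp--Zeilberger ansatz as automated in Kauers' \textsf{Asymptotics} package, and then fixes $\widetilde{C}_{3,5}$ numerically from a long list of values of $\widetilde{r}_{3,5}(n)$ generated by the recurrence and its initial terms. Your Laplace-point cross-checks at $\bth=\bzero$ and $\bth=(\pi,\ldots,\pi)$ are a sound sanity test but not part of the paper's argument, which, like yours, is heuristic-numerical in exactly the Wimp--Zeilberger sense for the connection constant.
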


\begin{corollary}
	The P\'olya number of the five-dimensional Cerberus lattice is approximately
	\begin{align}
		\prob_{3,5}(\boldsymbol{0}) \approx 0.01581.
	\end{align}
\end{corollary}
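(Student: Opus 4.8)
The plan is to reduce the P\'olya number to a single high-precision evaluation of the lattice Green function at $z=1$ and then to perform that evaluation using the order-$8$ recurrence already established for $\widetilde{r}_{3,5}(n)$. By P\'olya's random walk theorem \cite{Pol1921}, $\prob_{3,5}(\boldsymbol{0}) = 1 - 1/P_{3,5}(\boldsymbol{0},1)$, so the entire problem collapses to determining $P_{3,5}(\boldsymbol{0},1)$ to sufficient accuracy. First I would track the chain of normalizations. Since $q_{3,5} = 2^3\binom{5}{3} = 80$, the definition \eqref{eq:R-int} gives $P_{3,5}(\boldsymbol{0},1) = R_{3,5}(\boldsymbol{0},1/80)$. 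Because $M=3$ is odd, all odd-indexed coefficients vanish, so by \eqref{eq:R-M-odd} and \eqref{eq:P-normalization} one has $R_{3,5}(\boldsymbol{0},z) = \widetilde{R}_{3,5}(\boldsymbol{0},z^2)$. Combining these identities yields
\begin{align*}
	P_{3,5}(\boldsymbol{0},1) = \widetilde{R}_{3,5}\big(\boldsymbol{0},1/6400\big) = \sum_{n\ge 0}\widetilde{r}_{3,5}(n)\,6400^{-n}.
\end{align*}
The asymptotic corollary $\widetilde{r}_{3,5}(n) \sim \widetilde{C}_{3,5}\cdot 6400^n n^{-5/2}$ guarantees that this series converges: at the radius of convergence $z = 1/6400$ the summands behave like $\widetilde{C}_{3,5}\,n^{-5/2}$, which is summable, so $P_{3,5}(\boldsymbol{0},1)$ is finite, as it must be for a transient walk in five dimensions.

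Next I would generate a long list of values $\widetilde{r}_{3,5}(n)$ by iterating the order-$8$ recurrence of the previous theorem, seeded with the initial segment $\{\widetilde{r}_{3,5}(n)\}_{n=0}^{54}$ recorded in the text, precisely as was done for the four- and five-dimensional Heracles lattices of Subsects.~\ref{sec:H-4D} and \ref{sec:H-5D}. Summing the weighted terms $\widetilde{r}_{3,5}(n)\,6400^{-n}$ then produces a partial sum $S_{n_0}$ approximating $P_{3,5}(\boldsymbol{0},1)$. To certify the accuracy I would bound the truncation error using the same asymptotics:
\begin{align*}
	P_{3,5}(\boldsymbol{0},1) - S_{n_0} = \sum_{n\ge n_0}\widetilde{r}_{3,5}(n)\,6400^{-n} \asymp \widetilde{C}_{3,5}\sum_{n\ge n_0}n^{-5/2} \asymp n_0^{-3/2}.
\end{align*}
This tail decays faster than the $n_0^{-1}$ rate encountered in the four-dimensional Heracles case, so summing the first few thousand terms is more than enough to pin down $P_{3,5}(\boldsymbol{0},1) \approx 1.01606$. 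Substituting into P\'olya's formula gives $\prob_{3,5}(\boldsymbol{0}) = 1 - 1/1.01606 \approx 0.01581$, as claimed.

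The only genuine obstacle is numerical rather than structural. Because $P_{3,5}(\boldsymbol{0},1)$ is close to $1$, the P\'olya number emerges as a small difference $1 - 1/P_{3,5}(\boldsymbol{0},1)$ of two nearly equal quantities, so cancellation could corrupt the displayed digits unless the partial sums are carried in exact rational arithmetic or with generous guard precision. Accordingly I would compute the $\widetilde{r}_{3,5}(n)$ and their weighted sum exactly, convert to a high-precision rational (or interval) only at the end, and replace the $\asymp$ in the tail estimate by an explicit constant bound, so that the final approximation $\prob_{3,5}(\boldsymbol{0}) \approx 0.01581$ is guaranteed to the stated accuracy.
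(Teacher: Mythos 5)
Your proposal is correct and follows essentially the same route as the paper, which derives this corollary "by the same reasoning as the Heracles lattices": generate $\widetilde{r}_{3,5}(n)$ from the order-$8$ recurrence and initial values, sum $P_{3,5}(\boldsymbol{0},1)=\widetilde{R}_{3,5}(\boldsymbol{0},1/6400)$ with the tail controlled by the asymptotic $\widetilde{r}_{3,5}(n)\sim \widetilde{C}_{3,5}\cdot 6400^n n^{-5/2}$, then apply P\'olya's theorem. Your normalization chain ($q_{3,5}=80$, hence $z=1/6400$ after the even-index reindexing), the $n_0^{-3/2}$ tail bound, and the value $P_{3,5}(\boldsymbol{0},1)\approx 1.01606$ all match what the paper's procedure yields.
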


\section{Conclusion}\label{sec:conclusion}

\begin{table}[ht!]
	\def\arraystretch{1.5}
	\centering
	\caption{$M$-Headed lattice in dimension $N$}\label{tab:lattice}
	\begin{tabular}{cc|>{\centering}m{2.1cm}>{\centering}m{2.5cm}|c}
		\multicolumn{5}{l}{{\footnotesize \textsuperscript{$\dagger$}Entries with a gray background are newly computed in this work.}}\\
		\hline
		\multirow{2}{*}{$N$} & \multirow{2}{*}{$M$} & \multicolumn{2}{c|}{Ddifferential Equation} & \multirow{2}{*}{P\'olya Number} \\
		& & {\footnotesize Order} & {\footnotesize Degree} & \\
		\hline
		$1$& $1$ & $1$ & $2$ & $1$ \\\hline
		$2$& $1$ & $2$ & $2$ & $1$ \\\arrayrulecolor{gray!50}\hline\arrayrulecolor{black}
		$2$& $2$ & $2$ & $2$ & $1$ \\\hline
		$3$& $1$ & $3$ & $4$ & $\begin{gathered}
			1-2^{4}(\sqrt{3}+1)\pi^{3}\Gamma(\tfrac{1}{24})^{-2}\Gamma(\tfrac{11}{24})^{-2}\\
			\scalebox{0.8}{$(\approx 0.34054)$}
		\end{gathered}$ {\rule{0pt}{1.75\normalbaselineskip}}\\\arrayrulecolor{gray!60}\hline\arrayrulecolor{black}
		$3$& $2$ & $3$ & $3$ & $\begin{gathered}
			1-2^{\frac{14}{3}}3^{-2}\pi^{4}\Gamma(\tfrac{1}{3})^{-6}\\
			\scalebox{0.8}{$(\approx 0.25632)$}
		\end{gathered}$ {\rule{0pt}{1.75\normalbaselineskip}}\\\arrayrulecolor{gray!60}\hline\arrayrulecolor{black}
		$3$& $3$ & $3$ & $2$ & $\begin{gathered}
			1-2^{2}\pi^{3}\Gamma(\tfrac{1}{4})^{-4}\\
			\scalebox{0.8}{$(\approx 0.28223)$}
		\end{gathered}$ {\rule{0pt}{1.75\normalbaselineskip}}\\\hline
		$4$& $1$ & $4$ & $4$ & $\approx 0.19313$ \\\arrayrulecolor{gray!60}\hline\arrayrulecolor{black}
		& & $4$ & $7$ &  \\
		\multirow{-2}{*}{$4$}& \multirow{-2}{*}{$2$} & \cellcolor{gray!20}$11$ & \cellcolor{gray!20}$5$ & \multirow{-2}{*}{$\approx 0.09571$} \\\arrayrulecolor{gray!60}\hline\arrayrulecolor{black}
		& & \cellcolor{gray!20}$8$ & \cellcolor{gray!20}$32$ & \cellcolor{gray!20} \\
		\multirow{-2}{*}{$4$}& \multirow{-2}{*}{$3$} & \cellcolor{gray!20}$24$ & \cellcolor{gray!20}$8$ & \cellcolor{gray!20}\multirow{-2}{*}{$\approx 0.04332$} \\\arrayrulecolor{gray!60}\hline\arrayrulecolor{black}
		$4$& $4$ & $4$ & $2$ & $\approx 0.10605$ \\\hline
		$5$& $1$ & $5$ & $6$ & $\approx 0.13517$ \\\arrayrulecolor{gray!60}\hline\arrayrulecolor{black}
		& & $6$ & $13$ &\\
		\multirow{-2}{*}{$5$}& \multirow{-2}{*}{$2$} & \cellcolor{gray!20}$19$ & \cellcolor{gray!20}$7$ & \multirow{-2}{*}{$\approx 0.04657$} \\\arrayrulecolor{gray!60}\hline\arrayrulecolor{black}
		& & \cellcolor{gray!20}$14$ & \cellcolor{gray!20}$110$ & \cellcolor{gray!20} \\
		\multirow{-2}{*}{$5$}& \multirow{-2}{*}{$3$} & \cellcolor{gray!20}$69$ & \cellcolor{gray!20}$16$ & \cellcolor{gray!20}\multirow{-2}{*}{$\approx 0.01581$} \\\arrayrulecolor{gray!60}\hline\arrayrulecolor{black}
		& & \cellcolor{gray!20}$9$ & \cellcolor{gray!20}$24$ & \cellcolor{gray!20} \\
		\multirow{-2}{*}{$5$}& \multirow{-2}{*}{$4$} & \cellcolor{gray!20}$33$ & \cellcolor{gray!20}$6$ & \cellcolor{gray!20}\multirow{-2}{*}{$\approx 0.01561$} \\\arrayrulecolor{gray!60}\hline\arrayrulecolor{black}
		$5$& $5$ & $5$ & $2$ & $\approx 0.04473$ \\
		\hline
	\end{tabular}
\end{table}

In this work, we complete the investigation of all multi-headed lattices in dimensions no larger than five. Some important statistics such as the orders and degrees of the corresponding differential equations and the P\'olya numbers for these lattices are recorded in Table \ref{tab:lattice}. Apart from the three new results in our work, namely, the Heracles lattices ($M=N-1$) in dimensions $N=4$ and $5$ and the Cerberus lattice ($M=3$) in dimension $N=5$, as well as the results of Koutschan \cite{Kou2013} who confirmed experimental discoveries on the Orthrus lattices ($M=2$) in dimensions $N=4$ due to Guttmann \cite{Gut2009} and $5$ due to Broadhurst \cite{Bro2009}, most information in Table \ref{tab:lattice} can be found in Guttmann's survey \cite{Gut2010}.

It is notable that in \cite{Gut2010}, what Guttmann recorded were the differential equations for lattice Green functions $P(\boldsymbol{0},z^{\frac{1}{2}})$ rather than $P(\boldsymbol{0},z)$ if odd powers of $z$ vanish in the series expansion of $P(\boldsymbol{0},z)$, that is, in the cases where $M$ is odd or $M$ equals $N$. Hence, the degrees of these equations should be doubled to match the values in Table \ref{tab:lattice}. The same situation also happens to the two Cerberus lattices in our Theorems \ref{th:M3N4} and \ref{th:M3N5}. We also remind the reader that Koutschan's differential equations in \cite{Kou2013} are with respect to the usual derivation operator $D=\frac{\rd}{\rd z}$ instead of our $\vartheta=z\frac{\rd}{\rd z}$, so it requires some extra work to recover the corresponding degrees in Table \ref{tab:lattice}.

Meanwhile, as pointed out at the end of Sect.~\ref{sec:ODE-REC}, we intend to find differential equations for Green functions with either the smallest order or the smallest degree. We have applied the procedure therein to go through all multi-headed lattices in dimensions no larger than five. All known differential equations are of the minimal order, and there are only two exceptional cases with a larger degree, both for Orthrus lattices. We report the two new results as follows with their computer-assisted proofs given in \cite{CCJ2024}.

\begin{theorem}[Four-dimensional Orthrus lattice]\label{th:M2N4}
	The sequence $r_{2,4}(n)$ satisfies a recurrence of order $5$. The recurrence is of the form
	\begin{align*}
		\scalebox{0.85}{%
			$
			\begin{aligned}
				0&=(287649792 + \cdots + 967680 n^{6})r_{2,4}(n)\\
				&+ (708258816 + \cdots + 725760 n^{6})r_{2,4}(n+1)\\
				&+(379157760 + \cdots + 176400 n^{6})r_{2,4}(n+2)\\
				&+ (55519056 + \cdots + 14000 n^{6})r_{2,4}(n+3)\\
				&-(638976 + \cdots + 105 n^{6})r_{2,4}(n+4)\\
				&-(345000 + \cdots + 35 n^{6})r_{2,4}(n+5).
			\end{aligned}
			$
		}
	\end{align*}
	Accordingly, $R_{2,4}(\boldsymbol{0},z)$ satisfies a differential equation of order $11$ and degree $5$.
\end{theorem}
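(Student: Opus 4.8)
The plan is to apply, essentially verbatim, the guess-and-verify pipeline of Sect.~\ref{sec:ODE-REC} with the parameters $M=2$ and $N=4$. Because $M=2$ is even and $M\neq N$, neither hypothesis of the earlier vanishing criterion $r_{M,N}(2n+1)=0$ (which holds only when $M$ is odd or $M=N$) is met; hence, in contrast to the Cerberus cases, there is no even/odd normalization to perform, and I would work throughout with the sequence $r_{2,4}(n)$ and the normalized Green function $R_{2,4}(\boldsymbol{0},z)$ of \eqref{eq:R-int} directly. Moreover, unlike the Heracles situation of Theorem~\ref{th:H-sum}, the elementary symmetric polynomial $\sigma_2$ admits no comparably clean summation formula, so I would proceed through the $4$-fold integral rather than a sum.

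First I would reproduce, \`a la Koutschan~\cite{Kou2013}, a \emph{provably correct} differential equation for $R_{2,4}(\boldsymbol{0},z)$ by running creative telescoping (through the \textbf{CreativeTelescoping}/\textbf{FindCreativeTelescoping} commands of \textsf{HolonomicFunctions}~\cite{Kou2010b}) on the integrand
\[
	\bigl(1-4z\,\sigma_2(x_1,x_2,x_3,x_4)\bigr)^{-1}\prod_{j=1}^{4}\bigl(1-x_j^2\bigr)^{-1/2}
\]
over $[-1,1]^4$; this is the $M=2$, $N=4$ analogue of the order-$4$ equation already recorded for this lattice. Through the operator correspondence \eqref{eq:ODE-REC}, this yields a certified recurrence $\mathtt{REC}$ for $r_{2,4}(n)$, whose first values I would pin down independently from the constant-term expression \eqref{eq:p-CT}; iterating $\mathtt{REC}$ from these seeds then produces a list of $r_{2,4}(n)$ of whatever length the guessing step requires.

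Next I would feed that list to \textbf{GuessMinRE} from Kauers's \textsf{Guess} package~\cite{Kau2009} to conjecture the minimal-order recurrence $\mathtt{REC}''$; the expectation, testable against the same package, is that its order is $5$ with polynomial coefficients of degree $6$ in $n$, which under the order/degree dictionary of Sect.~\ref{sec:ODE-REC} corresponds to a differential equation for $R_{2,4}(\boldsymbol{0},z)$ of order $5+6=11$ and degree $5$. It then remains to certify $\mathtt{REC}''$. Writing $r''(n)$ for the sequence generated by $\mathtt{REC}''$ together with the genuine initial values of $r_{2,4}(n)$, both $r_{2,4}(n)$ (via $\mathtt{REC}$) and $r''(n)$ are P-finite, so their difference is P-finite as well \cite[Theorem~4]{Kau2013}, and a recurrence $\mathtt{REC}^{**}$ annihilating $r_{2,4}(n)-r''(n)$ can be computed by the \textbf{DFinitePlus} command of \textsf{HolonomicFunctions}~\cite{Kou2010b}. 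Checking that $r_{2,4}(n)-r''(n)=0$ for $n$ up to the order of $\mathtt{REC}^{**}$ forces this difference to vanish identically; thus $r_{2,4}(n)$ satisfies $\mathtt{REC}''$, and translating $\mathtt{REC}''$ back through \eqref{eq:ODE-REC} delivers the asserted order-$11$, degree-$5$ differential equation.

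The hard part is computational rather than conceptual and lives in the first step: the creative-telescoping run on the $4$-fold integral is the costly ingredient (for the sibling $M=3$, $N=4$ integral of Sect.~\ref{sec:H-4D} it already took about an hour), and without a Heracles-type summation shortcut one cannot fall back on the faster summation route used there. By contrast, the guessing and the \textbf{DFinitePlus}-based certification are routine finite computations once a certified $\mathtt{REC}$ and sufficiently many initial values are in hand; the only points demanding care are generating a list long enough that \textbf{GuessMinRE} returns the genuinely minimal recurrence, and confirming that the order of $\mathtt{REC}^{**}$ is small enough for the terminal finite check to be conclusive.
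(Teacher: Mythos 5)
Your proposal follows the paper's own pipeline essentially verbatim: a certified differential equation obtained \`a la Koutschan by creative telescoping on the $4$-fold integral with kernel $\bigl(1-4z\,\sigma_2\bigr)^{-1}\prod_j(1-x_j^2)^{-1/2}$, followed by the \textbf{GuessMinRE}/\textbf{DFinitePlus} guess-and-certify step of Sect.~\ref{sec:ODE-REC}, which is exactly the computer-assisted argument the paper delegates to \cite{CCJ2024}. The target numbers (order $5$, coefficient degree $6$, hence an order-$11$, degree-$5$ equation for $R_{2,4}(\boldsymbol{0},z)$) and the correctness criteria you state all match.
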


\begin{theorem}[Five-dimensional Orthrus lattice]\label{th:M2N5}
	The sequence $r_{2,5}(n)$ satisfies a recurrence of order $7$. The recurrence is of the form
	\begin{align*}
		\scalebox{0.85}{%
			$
			\begin{aligned}
				0&=(42140738676326400000 + \cdots + 3986266521600 n^{12})r_{2,5}(n)\\
				&+ (118427858324029440000 + \cdots + 3056137666560 n^{12})r_{2,5}(n+1)\\
				&+(62676619662919680000 + \cdots + 668530114560 n^{12})r_{2,5}(n+2)\\
				&+ (1794185247360768000 + \cdots + 7266631680 n^{12})r_{2,5}(n+3)\\
				&-(3522851180688416000 + \cdots + 11354112000 n^{12})r_{2,5}(n+4)\\
				&-(458904717778020000 + \cdots + 908328960 n^{12})r_{2,5}(n+5)\\
				&-(1106658753555600 + \cdots + 1013760 n^{12})r_{2,5}(n+6)\\
				&+(836209651013100 + \cdots + 760320 n^{12})r_{2,5}(n+7).
			\end{aligned}
			$
		}
	\end{align*}
	Accordingly, $R_{2,5}(\boldsymbol{0},z)$ satisfies a differential equation of order $19$ and degree $7$.
\end{theorem}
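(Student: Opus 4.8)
The plan is to follow the ``guess-and-certify'' procedure laid out at the end of Section \ref{sec:ODE-REC}, but now targeting the minimal-\emph{order} recurrence — equivalently, the minimal-\emph{degree} differential equation — rather than the minimal-order equation (order $6$, degree $13$) already recorded for this lattice. Since $M=2$ is even and $M\neq N$, the odd-indexed coefficients need not vanish, so throughout I would work directly with $r_{2,5}(n)$ and $R_{2,5}(\boldsymbol{0},z)$ and not with any halved normalization of the type in \eqref{eq:P-normalization}.

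First I would secure a \emph{certified} D-finite description of $R_{2,5}(\boldsymbol{0},z)$ to act as ground truth. Applying creative telescoping via Koutschan's \textsf{HolonomicFunctions} \cite{Kou2010b} to the integral
\[
R_{2,5}(\boldsymbol{0},z)= \frac{1}{\pi^5}\int_{-1}^{1}\cdots \int_{-1}^{1} \frac{\rd x_1\cdots \rd x_5}{\big(1-8z\,\sigma_2(x_1,\ldots,x_5)\big)\sqrt{1-x_1^2}\cdots \sqrt{1-x_5^2}}
\]
produces a differential equation for $R_{2,5}$ — in fact the order-$6$, degree-$13$ equation of Koutschan \cite{Kou2013} — and hence, through the correspondence \eqref{eq:ODE-REC}, a genuine recurrence $\mathtt{REC}$ for $r_{2,5}(n)$. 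Seeding $\mathtt{REC}$ with exact initial values computed from the constant-term formula \eqref{eq:p-CT}, I would then generate a long list of values of $r_{2,5}(n)$.

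Next I would run \textbf{GuessMinRE} from Kauers's package \textsf{Guess} \cite{Kau2009} on this list to conjecture the order-$7$ recurrence $\mathtt{REC}''$ stated in the theorem, reading off the associated order-$19$, degree-$7$ differential equation $\mathtt{ODE}''$ for $R_{2,5}(\boldsymbol{0},z)$ via \eqref{eq:ODE-REC}. To upgrade this conjecture to a proof, let $r''(n)$ be the sequence generated by $\mathtt{REC}''$ from the first values of $r_{2,5}(n)$. Both $r_{2,5}(n)$ and $r''(n)$ are P-finite, so by closure under linear combination \cite[Theorem 4]{Kau2013} their difference is P-finite, and I would compute an annihilating recurrence $\mathtt{REC}^{**}$ for $r_{2,5}(n)-r''(n)$ explicitly with the \textbf{DFinitePlus} command. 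It then suffices to verify that $r_{2,5}(n)-r''(n)=0$ for $n$ up to the order of $\mathtt{REC}^{**}$; the recurrence $\mathtt{REC}^{**}$ forces the difference to vanish identically, which establishes that $r_{2,5}(n)$ satisfies $\mathtt{REC}''$ and hence that $R_{2,5}(\boldsymbol{0},z)$ satisfies $\mathtt{ODE}''$.

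The main obstacle is computational rather than conceptual, and lies in the size of the intermediate objects. Because the guessed recurrence carries polynomial coefficients of substantial degree (up to $n^{12}$), the closure recurrence $\mathtt{REC}^{**}$ returned by \textbf{DFinitePlus} will have markedly larger order and coefficient bit-length, so one must produce enough exact values of $r_{2,5}(n)$ — via $\mathtt{REC}$ together with \eqref{eq:p-CT} — to cover the full verification range. I should stress that the rigorous content of the theorem is the \emph{sufficiency} of $\mathtt{REC}''$ (and of $\mathtt{ODE}''$), which the certificate $\mathtt{REC}^{**}$ guarantees; the claim that order $7$ and degree $7$ are genuinely \emph{minimal} is only supported heuristically by re-consulting \textsf{Guess}, not proved.
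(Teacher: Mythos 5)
Your proposal is correct and matches the paper's own route essentially verbatim: the authors likewise start from Koutschan's certified order-$6$, degree-$13$ equation for $R_{2,5}(\boldsymbol{0},z)$, generate values of $r_{2,5}(n)$, guess the order-$7$ recurrence with \textbf{GuessMinRE}, and certify it through the \textbf{DFinitePlus} closure argument of Sect.~\ref{sec:ODE-REC}, with the computer-assisted details deferred to \cite{CCJ2024}. Your caveats — working with $r_{2,5}(n)$ directly rather than a halved normalization, and the merely heuristic status of minimality — are also exactly the paper's position.
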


Finally, as in the same boat with previous work such as \cite{Kou2013}, the only obstacle that prevents us from marching on higher-dimensional cases is the exhausting computations for the differential equations based on creative telescoping or other means in a computer algebra system. Hence, we are not very optimistic about certifying these equations in a rigorous way, though it is still hopeful to take advantage of the heuristic methodology in \cite{HKMZ2016} to receive convincing information about higher-dimensional lattices.

\subsection*{Acknowledgements}

LJ is grateful to Christoph Koutschan for conversations regarding the modular reduction trick for creative telescoping.

\bibliographystyle{amsplain}

\end{document}